\documentclass[10pt]{amsart}
\usepackage{amssymb}
\usepackage{dsfont}
\usepackage{amscd}
\usepackage{mathabx}
\usepackage[mathscr]{euscript} % for the power-set P
\usepackage[all]{xy}
\usepackage{url}
\usepackage{stmaryrd}
\usepackage{comment}
\usepackage[retainorgcmds]{IEEEtrantools}
\usepackage{hyperref}

\newtheorem{theorem}{Theorem}[section]
\newtheorem{lemma}[theorem]{Lemma}

\newtheorem{proposition}[theorem]{Proposition}

\newtheorem{corollary}[theorem]{Corollary}

\theoremstyle{definition}
\newtheorem{definition}[theorem]{Definition}

\newtheorem{remark}[theorem]{Remark}
\newtheorem{question}[theorem]{Question}

\newtheorem{assumption}[theorem]{Assumption}

\newcommand{\pars}{\par\smallskip}
\newcommand{\parm}{\par\medskip}

\newcommand{\Zz}{{\mathds{Z}}}

\newcommand{\Cc}{{\mathds{C}}}
\newcommand{\Rr}{{\mathds{R}}}

\newcommand{\Qq}{{\mathds{Q}}}

\newcommand{\Uu}{{\mathds{U}}}

\newcommand{\ga}{\mathbb{G}_{\rm{a}}}

\title[Generic and supertight automorphisms]{On generic and supertight automorphisms}

\author[P. KOWALSKI]{Piotr Kowalski$^{\diamondsuit}$}
\thanks{$^{\diamondsuit}$
 Supported by the Narodowe Centrum Nauki grant no. 2021/43/B/ST1/00405 and by the T\"{u}bitak grant no. 1001-124F359.}
\address{$^{\diamondsuit}$Instytut Matematyczny\\
Uniwersytet Wroc{\l}awski\\
Wroc{\l}aw\\
Poland}
\email{pkowa@math.uni.wroc.pl} \urladdr{http://www.math.uni.wroc.pl/\textasciitilde pkowa/ }

\author[P. U\v{g}urlu Kowalski]{P\i nar U\u{g}urlu Kowalski$^{\heartsuit}$}
\address{$^{\heartsuit}$Istanbul Bilgi University}
\email{pinar.ugurlu@bilgi.edu.tr}

\keywords{generic automorphism, supertight automorphism, pseudofinite groups and fields}
\subjclass[2010]{03C45, 03C60, 20G99}

\DeclareMathOperator{\gal}{Gal}

\DeclareMathOperator{\id}{id}
\DeclareMathOperator{\acl}{acl}\DeclareMathOperator{\aut}{Aut}
\DeclareMathOperator{\thh}{Th}\DeclareMathOperator{\cld}{cld}
\DeclareMathOperator{\tp}{tp}
\DeclareMathOperator{\fix}{Fix}\DeclareMathOperator{\dcl}{dcl}
\DeclareMathOperator{\alg}{alg}

\begin{document}

\begin{abstract}
We show that generic automorphisms of stable groups are supertight (a notion introduced in \cite{pinulla}) in a strong sense. In particular, we obtain the existence of supertight automorphisms. We also answer a question from \cite{pinulla} concerning the relationship between supertight automorphisms of $\mathrm{PGL}_2(K)$ and generic automorphisms of the underlying field $K$. Moreover, we provide partial evidence---already suggested by Hrushovski in \cite{Hrupfstr}---toward the principle that ``fixed points are pseudofinite’’ in the setting of generic automorphisms of simple groups of finite Morley rank.
\end{abstract}

\maketitle

\section{Introduction}
The Cherlin-Zilber conjecture (proposed independently by Cherlin \cite{cherczc} and Zilber \cite{zilzcc}) states that infinite simple groups satisfying some natural dimension conditions (that is, they have \emph{finite Morley rank}) must be algebraic. This note concerns a possible approach, suggested by Hrushovski in \cite{Hrupfstr}, towards this conjecture. A more general conjecture of Zilber---the Trichotomy Conjecture---asserting the algebraicity of certain one-dimensional sets (without any assumed group structure, see \cite{zilbericm}) was  refuted by Hrushovski in \cite{HR2}. Thus, the Cherlin–Zilber conjecture perhaps remains “closest in spirit’’ to Zilber’s Trichotomy Conjecture. The Cherlin-Zilber conjecture is still open; however, the three-dimensional case was settled positively by Fr\'{e}con in \cite{frecon3}.

It should be mentioned that the most promising strategy for addressing the Cherlin-Zilber conjecture is the so-called \emph{Borovik program}, which adapts techniques from finite group theory to classify infinite simple groups of finite Morley rank. This framework divides such groups into even, odd, mixed, and degenerate types. Significant progress has been made: in particular, mixed types have been shown not to exist, and even-type groups are isomorphic the groups of rational points of algebraic groups over algebraically closed fields of characteristic $2$ (see \cite{abc}).

Hrushovski's suggestion in the paragraph titled `Connection to the Borovik program?' at the end of \cite{Hrupfstr} can be summarized as follows.
\begin{enumerate}
  \item Take a generic automorphism $\sigma$ of a simple group of finite Morley rank $G$.

  \item Show that the fixed-point subgroup $\fix(\sigma)$ is a pseudofinite group, i.e., a model of the common theory of all finite groups. (Note, however; that $\fix(\sigma)$ \emph{need not} be simple as an abstract group; see Remark \ref{tricky}(1).)

  \item Show that $\fix(\sigma)\cong H(F)$, where $H$ is a simple algebraic group defined over a pseudofinite field $F$.

  \item Conclude that $G\cong H(K)$, where $K$ is an algebraically closed field extending $F$.
\end{enumerate}
Hrushovski also points out that even without achieving the crucial Item (2) above:
$$\text{``counting arguments applied in finite group theory might be applied via $\mu$''}.$$
Here, $\mu$ is a measure on $\fix(\sigma)$ constructed in \cite[Proposition 11.1]{Hrupfstr} that resembles the counting measure on a pseudofinite structure. This suggestion has not yet been pursued; we provide further comments in Section \ref{secpf}.

Clearly, the primary obstacle here is the pseudofiniteness of the subgroup of invariants of the generic automorphism $\sigma$. One might still hope to show that the Cherlin–Zilber conjecture is \emph{equivalent} to pseudofiniteness as described above; such a partial result (see \cite[Conjecture 1.5]{pinulla}) would represent a significant step toward justifying Hrushovski’s suggestion. In \cite{pinulla}, the notion of supertight automorphisms is considered (see Definition \ref{defstight}) and the following two steps are specified.

\begin{itemize}
  \item The \emph{Algebraic identification step} corresponding to Item (4) above.

  \item The \emph{Model-theoretic step}, which consists of showing that a generic automorphism is supertight.
\end{itemize}
Afterwards, fixed-point subgroups of supertight automorphisms were studied by Karhum\"{a}ki in \cite{ulla2}.

In this paper, we establish the aforementioned ``model-theoretic step''; specifically, we prove that generic automorphisms are supertight in a strong sense (see Theorem~\ref{gensup}). As a consequence, we also show that supertight automorphisms \emph{exist} on elementary extensions (see Corollary~\ref{exist}). Furthermore, we address a question from \cite{pinulla} regarding the connection between supertight automorphisms of $\mathrm{PGL}_2(K)$ and generic automorphisms of the field $K$ (see Section~\ref{secalggps}). We also show that the fixed-point groups satisfy the Ax conditions for pseudofiniteness, assuming a very weak version of elimination of imaginaries (see Section~\ref{secpf}). Whether this is related to actual pseudofiniteness for simple groups of finite Morley rank remains the main challenge.

This paper is organised as follows. In Section \ref{secgeneric}, we discuss properties of generic automorphisms on stable structures. In Section \ref{sectands}, we compare generic automorphisms on stable groups to supertight automorphisms and show the results mentioned above. In Section \ref{secaxiom}, we discuss an axiomatization of generic automorphisms using the results existing in the literature. In Section \ref{secalggps}, we answer
  \cite[Question 6.1]{pinulla}. Finally, in Section \ref{secpf}, we provide a partial evidence (as suggested by Hrushovski in \cite{Hrupfstr}) toward the pseudofiniteness of the group of fixed points of a generic automorphism.

\section{Generic automorphisms}\label{secgeneric}
Let $L$ be a language and $M$ be an $L$-structure. For $\sigma\in \aut(M)$, we want to say when $\sigma$ is generic. Our definition essentially comes from \cite{KiPi}. Let $T:=\thh(M)$ and $T'$ be the ``'Morleyisation'' of $T$, see \cite[page 62]{HoMo} (Hodges actually suggested the name ``atomization''), obtained by naming each $L$-formula by a new relation symbol in the new language $L'$ expanding $L$. Then, the models of $T'$ are exactly the $L$-structures elementarily equivalent with $M$ and $T'$ has quantifier elimination (in $L'$). In particular, the theory $T'$ is model complete. Let $L'_{\sigma}:=L'\cup \{\sigma\}$, where $\sigma$ is a new unary function symbol, and
$$T'_{\sigma}:=T'\cup \{\text{$\sigma$ is an $L'$-automorphism}\}.$$
\begin{definition}[page 263 in \cite{KiPi}]\label{defgen}
We call an automorphism $\sigma$ as above \emph{generic}, if $(M,\sigma)$ is an existentially closed model of $L'_{\sigma}$.
\end{definition}
It is clear that the $L'_{\sigma}$-theory $T'_{\sigma}$ is inductive, so the following is obvious.
\begin{lemma}\label{ec}
If $M$ and $\sigma$ are as above, then there is an elementary extension $M\preccurlyeq M'$  and $\sigma'\in \aut(M')$ extending $\sigma$ such that $\sigma'$ is generic.
\end{lemma}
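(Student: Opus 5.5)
The plan is to use the standard fact that every model of an inductive theory embeds into an existentially closed model of that theory. I then check that such an extension is automatically an elementary extension of $M$.

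First I would confirm that $T'_{\sigma}$ is inductive, i.e.\ $\forall\exists$-axiomatizable. The theory $T'$ has quantifier elimination in $L'$, so each of its axioms is equivalent modulo $T'$ to a universal sentence saying that the new relation symbols agree with their defining formulas. This is a set of $\forall\exists$ sentences, because each defining formula is quantifier-free in $L'$ after Morleyisation. Concretely, $T'$ can be axiomatized by the $\forall\exists$ sentences $\forall\bar x\,(R_\varphi(\bar x)\leftrightarrow \varphi^*(\bar x))$, unwound inductively, together with $T$ rewritten in $L'$. The statement ``$\sigma$ is an $L'$-automorphism'' consists of universal sentences, namely that $\sigma$ preserves each symbol in both directions and is injective, plus the $\forall\exists$ sentence $\forall y\,\exists x\,\sigma(x)=y$. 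Hence unions of chains of models of $T'_{\sigma}$ are again models of $T'_{\sigma}$.

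Next I would build an existentially closed extension by the usual chain construction. Start with $(M_0,\sigma_0):=(M,\sigma)$, which is a model of $T'_{\sigma}$. At successor stages, enumerate all existential $L'_{\sigma}$-formulas with parameters in $M_i$. Realize each of them, whenever possible, in some extension that is a model of $T'_{\sigma}$, using a transfinite chain and taking unions at limits; inductivity keeps every stage a model. Let $M_{i+1}$ be the union. After $\omega$ steps, the union $(M',\sigma')$ is an existentially closed model of $T'_{\sigma}$. This is the textbook argument, for example in Hodges, and I would simply cite it.

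Finally, I would check that $M\preccurlyeq M'$ as $L$-structures, which is where model completeness enters. Every extension in the chain is in particular an $L'$-substructure extension between models of $T'$. Since $T'$ has quantifier elimination, it is model complete, so these embeddings are $L'$-elementary and hence $L$-elementary. The reduct of $M'$ therefore satisfies $M\preccurlyeq M'$, and $\sigma'$ extends $\sigma$ by construction. By Definition~\ref{defgen}, $\sigma'$ is generic in the sense of existential closedness among models of $T'_{\sigma}$.

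I do not expect a real obstacle here. The only point needing care is that existential closedness is taken relative to models of $T'_{\sigma}$ rather than $L'_{\sigma}$-structures in general. This is also why the Morleyisation is used: it makes substructure embeddings between models coincide with elementary ones.
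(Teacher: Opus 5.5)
Your proposal is correct and follows the paper's route. The paper simply notes that $T'_{\sigma}$ is inductive, so $(M,\sigma)$ extends to an existentially closed model of $T'_{\sigma}$, and it relies on the model completeness of $T'$ to get $M\preccurlyeq M'$; your chain construction and elementarity check spell out exactly this. One wording fix: your first description of the Morleyisation axioms as ``universal'' is inaccurate, but your ``concretely'' version is the right justification, namely unwinding $R_{\exists y\psi}(\bar x)\leftrightarrow\exists y\,R_{\psi}(\bar x,y)$ into $\forall\exists$ sentences (alternatively, cite that model complete theories are $\forall\exists$-axiomatizable).
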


\begin{remark}\label{present}
We comment here on the definition of a generic automorphism.
\begin{enumerate}
  \item Usually (see \cite{BGH, ChPi, KiPi}), one begins with a complete theory admitting quantifier elimination and defines the notion of a generic automorphism relative to it, as in \cite[p.~263]{KiPi}. However, since we are interested in groups of finite Morley rank, we prefer to start with an arbitrary structure. Both approaches clearly yield the same notion of genericity. In particular, if $M$ is an algebraically closed field, then $\sigma\in \aut(M)$ is generic if and only if $(M,\sigma)$ is a model of the theory ACFA (see \cite{acfa1}).

  \item There is also a distinct notion of a generic automorphism of a countable homogenous structure (see \cite{trussgen}). In contrast to the automorphisms described in Lemma~\ref{ec}, these \emph{need not} exist. These two notions of genericity are compared in \cite{barzam}.

\end{enumerate}
\end{remark}

\begin{assumption}\label{assstable}
For the remainder of the paper, we assume that \textbf{the structure $M$ is stable}; that is, the theory $T$ (or equivalently, $T'$) above is stable. We also fix a monster model $\Uu\models T'$, which is a sufficiently saturated elementary extension of $M$. All other models of $T$ considered are tacitly assumed to be elementary substructures of $\Uu$ of small cardinality relative to the saturation of $\Uu$.
\end{assumption}
The following is quite obvious and should be folklore.
\begin{lemma}\label{tensor}
Suppose that $k>0$ and for each $i\in \{1,\ldots,k\}$ we have
$$M\prec N_i\prec \Uu,\ \ M\prec N_i'\prec \Uu$$
such that $(N_1,\ldots,N_k)$ and $(N_1',\ldots,N_k')$ are forking independent tuples over $M$. Suppose also that $\sigma\in \aut(M)$ and for each  $i\in \{1,\ldots,k\}$ we have
$$\sigma_i:N_i \xrightarrow{\cong} N_i'$$
extending $\sigma$. Then, there is $\tau\in \aut(\Uu)$ extending all $\sigma_1,\ldots,\sigma_k$.
\end{lemma}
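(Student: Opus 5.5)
The plan is to glue the partial isomorphisms $\sigma_i$ into one elementary map on the union $N_1\cup\ldots\cup N_k$, and then extend it to $\Uu$ by strong homogeneity. Concretely, let $\Sigma:=\sigma_1\cup\ldots\cup\sigma_k$. This is a well-defined function: if $a\in N_i\cap N_j$ with $i\neq j$, then independence of $N_i$ and $N_j$ over $M$ gives $a\in\acl(M)\cap N_i$. In fact $a\in M$, since $M$ is a model and $a$ is algebraic over $M$, and $\acl(M)=M$ for a model in the eq-free sense; we may also simply note $N_i\cap N_j=M$. On $M$ all the $\sigma_i$ agree with $\sigma$. Since $\Uu$ is strongly homogeneous and everything is small, it suffices to show that $\Sigma$ is elementary, i.e.
$$\tp(N_1,\ldots,N_k)=\tp(N_1',\ldots,N_k')$$
under the correspondence given by $\Sigma$.

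I will prove this by induction on $k$; the case $k=1$ is the hypothesis that $\sigma_1$ is an isomorphism of elementary substructures, hence elementary in $\Uu$. For the inductive step, assume there is $\tau_0\in\aut(\Uu)$ extending $\sigma_1,\ldots,\sigma_{k-1}$. Write $A:=N_1\cup\ldots\cup N_{k-1}$ and $A'=\tau_0(A)=N_1'\cup\ldots\cup N_{k-1}'$. By independence, $\tp(N_k/A)$ does not fork over $M$. Since $M$ is a model and $T$ is stable, types over $M$ are stationary, so $\tp(N_k/A)$ is the unique nonforking extension of $p:=\tp(N_k/M)$ (viewing $N_k$ as a tuple enumerated in a fixed order). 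Similarly, $\tp(N_k'/A')$ is the unique nonforking extension of $\tp(N_k'/M)$.

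Now $\sigma_k$ is elementary and extends $\sigma$, so $\tp(N_k'/M)=\sigma(p)$. Applying $\tau_0$, which extends $\sigma$ on $M$, the type $\tau_0(\tp(N_k/A))$ is a nonforking extension of $\sigma(p)$ to $A'$. By uniqueness it equals $\tp(N_k'/A')$, with the enumeration of $N_k'$ given by $\sigma_k$. Hence the map $\tau_0|_A\cup\sigma_k$ is elementary, and strong homogeneity of $\Uu$ extends it to the desired $\tau$. Here we use that forking independence of the tuple means each $N_i$ is independent from the union of the others over $M$; equivalently, $N_k\perp_M A$ (and likewise for the primed tuple) follows by symmetry and transitivity.

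The only real content is the stationarity step. It needs that types over models in stable theories are stationary and that automorphisms preserve nonforking, and I expect no genuine obstacle. A minor point is the well-definedness of $\Sigma$ on overlaps, but the inductive argument sidesteps it, since $\tau_0\cup\sigma_k$ is checked to be elementary as a map on tuples, and agreement on $N_k\cap A\subseteq M$ holds because both restrict to $\sigma$ there.
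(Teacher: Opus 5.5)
Your proof is correct and follows the paper's argument. The key step in both is that the type of $N_k$ over the previously handled models, and its image under the already-constructed map, are both nonforking extensions of the same type over the model $M$, so they coincide by stationarity. The only difference is that you run an explicit induction on $k$, using an automorphism $\tau_0$ extending $\sigma_1,\ldots,\sigma_{k-1}$, whereas the paper writes out only the case $k=2$ and leaves the general case implicit.
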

\begin{proof}
We need to show that the map
$$\sigma_1\cup \ldots \cup \sigma_k:N_1\cup \ldots \cup N_{k} \to N_1'\cup \ldots \cup N_{k}'$$
is elementary. For convenience, we do it only in the case of $k=2$. Since $N_1'=\sigma_1(N_1)$ and $N_2'=\sigma_2(N_2)$, it is enough to show the following.
\begin{equation}
  \tag{$*$}
\sigma_1\left(\tp\left(N_2/N_1\right)\right)=\tp\left(N_2'/N_1'\right).
\end{equation}
Clearly, $\tp\left(N_2'/N_1'\right)$ is an extension of $\tp\left(N_2'/M\right)$ which is non-forking since $N_2'$ is forking-independent from $N_1'$ over $M$. We also have
$$\sigma\left(\tp\left(N_2/M\right)\right)=\tp\left(N_2'/M\right),$$
since $N_2'=\sigma_2(N_2)$ and $\sigma_2$ extends $\sigma$. Since $\sigma_1$ extends $\sigma$, we obtain that $\sigma_1\left(\tp\left(N_2/N_1\right)\right)$ is an extension of $\tp\left(N_2'/M\right)$ as well. This last extension is non-forking, since it is the image by $\sigma_1$ of the non-forking extension $\tp(N_2/M)\subseteq \tp(N_2/N_1)$ (we use here that $N_2$ is forking-independent from $N_1$ over $M$). By stationarity of types over models in stable theories, we obtain $(*)$.
\end{proof}
The following result and its proof are essentially taken from \cite[Lemma 1.12]{acfa1}, where it was done for the case of algebraically closed fields.
\begin{theorem}\label{kgen}
Suppose that $\sigma\in \aut(M)$ is generic and $1\leqslant k<l$. Then the following holds.
\begin{enumerate}
  \item The automorphism $\sigma^k$ is generic as well.

  \item If $k\mid l$, then $\fix(\sigma^k)\lneqq \fix(\sigma^l)$.
\end{enumerate}

\end{theorem}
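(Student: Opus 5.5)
We follow the proof of \cite[Lemma 1.12]{acfa1}, replacing field-theoretic amalgamation by Lemma \ref{tensor}. Recall what we have to check for genericity of $\sigma^k$. Take $(M,\sigma^k)\subseteq (N,\tau)$, where $N\models T'$ and $\tau$ is an $L'$-automorphism of $N$. Let $\varphi$ be a quantifier-free $L'_{\sigma}$-formula with parameters from $M$ that has a solution in $(N,\tau)$. We must show it already has a solution in $(M,\sigma^k)$. By model completeness of $T'$ we may assume $M\prec N\prec \Uu$.

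The key step is to build an automorphism $\rho$ of an elementary extension of $M$ that extends $\sigma$ and satisfies $\rho^k\supseteq\tau$. First take $k$ copies $N_0,\ldots,N_{k-1}$ of $N$ over $M$, forking independent over $M$, with isomorphisms $f_i\colon N\to N_i$. We require $f_i|_M=\sigma^i$. To arrange this, extend $\sigma^i$ to some automorphism of $\Uu$, take the image of $N$ under it, and then move the images to an independent position over $M$ using stationarity. Set $N_k:=N_0$ and define $\rho_i:=f_{i+1}\circ f_i^{-1}\colon N_i\to N_{i+1}$ for $i<k-1$, and $\rho_{k-1}:=f_0\circ\tau\circ f_{k-1}^{-1}\colon N_{k-1}\to N_0$. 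Each $\rho_i$ extends $\sigma$. This is immediate for $i<k-1$. For $i=k-1$ it holds because $\tau|_M=\sigma^k$ gives $\sigma^{0}\sigma^k\sigma^{-(k-1)}=\sigma$. The tuples $(N_0,\ldots,N_{k-1})$ and $(N_1,\ldots,N_{k-1},N_0)$ are both independent over $M$. Lemma \ref{tensor} then gives $\rho\in\aut(\Uu)$ extending all $\rho_i$. By construction $\rho^k|_{N_0}=f_0\tau f_0^{-1}$, so $(N_0,\rho^k|_{N_0})\cong(N,\tau)$ over $M$, where $f_0$ is the identity on $M$.

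We now have $(M,\sigma)\subseteq(\Uu,\rho)$. The formula $\varphi$ rewrites as a quantifier-free $L'_\sigma$-formula $\varphi'$ by replacing $\tau$ with $\sigma^k$. Then $\varphi'$ is satisfied in $(\Uu,\rho)$ by $f_0$ of the witness. We can pass to a small $\rho$-invariant elementary substructure, for instance the closure under $\rho^{\pm1}$ and Skolem hull. Existential closedness of $(M,\sigma)$ then gives a witness in $M$, which proves (1). The main obstacle is the bookkeeping ensuring that both independent tuples and all $\rho_i$ agree with $\sigma$ on $M$. Using $f_i|_M=\sigma^i$ takes care of this.

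For (2), write $l=km$ with $m\geq2$. By (1), $\sigma^k$ is generic, so it suffices to show: if $\sigma$ is generic and $m\ge2$, then $\fix(\sigma)\neq\fix(\sigma^m)$. Since $\fix(\sigma)\subseteq\fix(\sigma^m)$ trivially, this gives the strict inclusion. Here the theory $T$ is assumed to have infinite models, which is implicit. Take a non-algebraic type over $M$, and independent realizations $a_0,\ldots,a_{m-1}$ of its nonforking extension. Let $N_i\succ M$ be independent models containing $a_i$, with $N_i$ isomorphic via maps extending $\sigma^i$, built as above. Define the cyclic maps $N_i\to N_{i+1}$ as before, closing the cycle with $\tau=\id$, so the last map is $N_{m-1}\to N_0$. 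Lemma \ref{tensor} gives $\rho\supseteq\sigma$ with $\rho^m(a_0)=a_0$ and $\rho(a_0)=a_1\neq a_0$. Now $\exists x\,(\sigma^m(x)=x\wedge \sigma(x)\neq x)$ holds in an extension of $(M,\sigma)$, so by genericity it holds in $M$.
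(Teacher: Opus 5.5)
Your argument for (1) is correct and is the paper's argument. The paper's Claim builds the same cycle of independent copies of $N$ over $M$, closes it with $\tau\circ\sigma_1^{-1}\circ\cdots\circ\sigma_{k-1}^{-1}$, applies Lemma \ref{tensor}, and then transfers the existential formula exactly as you do.

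The gap is in (2), at ``closing the cycle with $\tau=\id$''. Lemma \ref{tensor} needs every partial map to extend one and the same $\sigma\in\aut(M)$. Your closing map $f_0\circ\tau\circ f_{m-1}^{-1}$ restricts on $M$ to $\tau|_M\circ\sigma^{-(m-1)}$, and this equals $\sigma$ only if $\tau|_M=\sigma^m$. With $\tau=\id$ it is $\sigma^{-(m-1)}$, so unless $\sigma^m=\id_M$ your maps do not even agree on $M$. For the same reason you cannot have $a_i\models\tp(a_0/M)$ together with $f_i(a_0)=a_i$ and $f_i|_M=\sigma^i$.

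What you actually need is some $N_0\succ M$ and $\tau\in\aut(N_0)$ extending $\sigma^m$ that fixes some $a_0\in N_0\setminus M$. Equivalently, you need a non-algebraic type $q$ over $M$ with $\sigma^m(q)=q$. Given such $q$, your construction from (1) yields $\rho\supseteq\sigma$ with $\rho^m(a_0)=a_0\neq\rho(a_0)$, the inequality holding because $N_0\cap N_1=M$. Such a type is also necessary: any witness outside $M$ in an extension of $(M,\sigma)$ has a $\sigma^m$-invariant type over $M$. The paper gets invariance by choosing $p$ not forking over $\emptyset$, asserting $\sigma(p)=p$, and cyclically shifting a Morley sequence in $p$.

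This invariance is the real content of (2), and an arbitrary non-algebraic type cannot supply it. Even ``does not fork over $\emptyset$'' gives $\sigma(p)=p$ only when $p|_{\emptyset}$ is stationary. In fact (2) fails for general stable $M$. Let $M$ be an equivalence relation with exactly two infinite classes, and let $\sigma$ be a generic automorphism (of an elementary extension, by Lemma \ref{ec}) extending a swap of the classes. Then $\sigma$ and $\sigma^3$ both swap the classes, so $\fix(\sigma)=\fix(\sigma^3)=\emptyset$.

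For an infinite stable group, which is the case used in Section \ref{sectands}, take $q$ to be the generic type of $G^0$ over $M$. It is non-algebraic and $\aut(M)$-invariant, and with it either your repaired cycle or the paper's Morley-sequence argument goes through.
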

\begin{proof}
The following result is crucial for the proof of Item (1). Its proof is adapted from that of \cite[Lemma 1.12]{acfa1}; we note that the genericity of $\sigma$ is not required here.

\medskip
\textbf{Claim}
\\
Suppose that $M\preccurlyeq N\prec \Uu$ and $\tau\in \aut(N)$ such that $\tau$ extends $\sigma^k$. Then, there is $\rho\in \aut(\Uu)$ such that $\rho$ extends $\sigma$ and $\rho^k$ extends $\tau$
\begin{proof}[Proof of Claim]
Let $p:=\tp(N/M)$. For any $i\in \{1,\ldots,k-1\}$ let $N_i$ realize the unique non-forking extension of $\sigma^i(p)$ to $N_0\cup \ldots \cup N_{i-1}$ where $N_0:=N$. Let
$$\sigma_i:N_{i-1}\xrightarrow{\cong} N_i$$
extend $\sigma$ for $i$ as above and we define
$$\sigma_k:=\tau\circ \sigma_{1}^{-1}\circ \ldots \circ \sigma_{k-1}^{-1}:N_{k-1}\xrightarrow{\cong} N_0=N.$$
Note that $\sigma_k$ extends $\sigma$ as well, since $\tau$ extends $\sigma^k$ and the product of the inverses extends $\sigma^{-(k-1)}$. We now have the following:
\begin{itemize}
  \item $N_0,\ldots,N_{k-1}$ is a sequence of models of $T'$ which is forking independent over $M$;
  \item the maps
  $$\sigma_1:N_{0}\to N_1, \sigma_2:N_{1}\to N_2, \ldots,\sigma_k:N_{k-1}\to N_0$$
  are isomorphisms extending $\sigma$.
\end{itemize}
By Lemma \ref{tensor} (for $(N_0,\ldots,N_{k-1})$ and $(N_1,\ldots,N_{k-1},N_0)$), there is $\rho\in \aut(\Uu)$ such that for each $i\in \{1,\ldots,k\}$, $\rho$ extends $\sigma_i$. Therefore, $\rho$ extends $\sigma$ and $\rho^k$ extends $\sigma_k\circ \sigma_{k-1}\circ \ldots \circ \sigma_1=\tau$ as desired.
\end{proof}

It is standard now to show that $\sigma^k$ is generic. Let us take a quantifier free $L'_{\sigma}$-formula $\varphi(x)$ over $M$ and an $L'_{\sigma}$-extension $(M,\sigma^k)\subseteq (N,\tau)$ such that $(N,\tau)\models \exists x\varphi(x)$. Note that there is a corresponding $L'_{\sigma}$-formula $\widetilde{\varphi}(x)$ over $M$ such that for any $L'_{\sigma}$-extension $(M,\sigma)\subseteq (M',\sigma')$ we have that
\begin{equation}
  \tag{$**$}
(M',\sigma')\models \exists x\widetilde{\varphi}(x)\ \ \ \ \ \iff\ \ \ \ \ \ \ (M',(\sigma')^k)\models \exists x \varphi(x).
\label{star}
\end{equation}
We take now $\rho\in \aut(\Uu)$ given by Claim. Since $(N,\tau)\subseteq (\Uu,\rho^k)$ is an $L'_{\sigma}$-extension and $\varphi(x)$ is quantifier-free, we get that $(\Uu,\rho^k)\models \exists x\varphi(x)$. By (\ref{star}), we get $(\Uu,\rho)\models \exists x\widetilde{\varphi}(x)$. Since $(M,\sigma)$ is existentially closed, we get that $(M,\sigma)\models \exists x\widetilde{\varphi}(x)$. By (\ref{star}) again, we finally obtain $(M,\sigma^k)\models \exists x \varphi(x)$, which finishes the proof of Item (1).

\medskip

For the proof of Item (2), we proceed similarly. We fix a non-algebraic 1-type $p$ over $M$ which does not fork over $\emptyset$. Let $a_1,a_2,\ldots,a_l$ be a (short) Morley sequence in $p$. Then $a_2,a_3,\ldots,a_{l},a_1$ is a Morley sequence in $p$ as well, hence we get the following
$$(a_1,a_2,\ldots,a_{l-1},a_l)\equiv_M (a_2,a_3,\ldots,a_{l},a_1)\models p^{\otimes l}.$$
Since $p$ does not fork over $\emptyset$, $p$ is $\aut(M)$-invariant. In particular, we have $\sigma(p^{\otimes l})=p^{\otimes l}$.
Hence, there is $\tau\in \aut(\Uu)$ extending $\sigma$ such that
$$\tau(a_1)=a_2,\tau(a_2)=a_3,\ldots,\tau(a_{l-1})=a_l,\tau(a_l)=a_1.$$
Therefore, we get:
$$a_1\in \fix\left(\tau^l\right)\setminus \fix\left(\tau^k\right).$$
Since $(M,\sigma)$ is existentially closed, we obtain $\fix(\sigma^k)\neq \fix(\sigma^l)$.
\end{proof}

\begin{remark}
Given our focus on groups of finite Morley rank, the assumption of stability is more than sufficient for our purposes. Nevertheless, we are not aware of any counterexamples to Theorem~\ref{kgen} in the absence of stability. It is plausible that the arguments presented in this section could be extended to certain ``neo-stability'' contexts.
\end{remark}
We finish this section with one observation and its standard proof is left to the reader. For the notion of bi-interpretability, we refer to \cite[Chapter 5]{HoMo}.
\begin{proposition}\label{biint}
Assume that the structures $\mathcal{M}$ and $\mathcal{N}$ are bi-interpretable and
$$\Gamma:\aut(\mathcal{M})\xrightarrow{\cong} \aut(\mathcal{N})$$
is the natural isomorphism coming from the associated bi-interpretability functor. Then, $\Gamma$ takes generic automorphisms of $\mathcal{M}$ onto generic automorphisms of $\mathcal{N}$.
\end{proposition}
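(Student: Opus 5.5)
The plan is to transfer the existential-closedness condition of Definition~\ref{defgen} along the bi-interpretation, by translating quantifier-free $L'_\sigma$-formulas on one side into existential (in fact, quantifier-free after Morleyisation) formulas on the other. Fix interpretations $F$ of $\mathcal{N}$ in $\mathcal{M}$ and $G$ of $\mathcal{M}$ in $\mathcal{N}$, together with definable isomorphisms $\mathcal{M}\cong G(F(\mathcal{M}))$ and $\mathcal{N}\cong F(G(\mathcal{N}))$. Then $\Gamma(\sigma)$ is the automorphism that $\sigma$ induces on $F(\mathcal{M})\cong\mathcal{N}$; it acts on equivalence classes of tuples by $[\bar a]\mapsto[\sigma(\bar a)]$. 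Because the situation is symmetric and $\Gamma^{-1}$ is the analogous map for $G$, it is enough to show one direction: if $\sigma$ is generic, then so is $\Gamma(\sigma)$.

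The first step is to note that the interpretation functor extends to models of $T'_\sigma$. Concretely, for any model $(M_1,\sigma_1)$ of $T'_\sigma$ with $M_1\equiv\mathcal{M}$, the structure $F(M_1)$ is a model of $\thh(\mathcal{N})'$ and $\sigma_1$ induces $\Gamma(\sigma_1)\in\aut(F(M_1))$. Likewise, an $L'_\sigma$-extension $(M_1,\sigma_1)\subseteq(M_2,\sigma_2)$ is elementary in $L$, since $T'$ is model complete. It therefore induces an embedding $(F(M_1),\Gamma(\sigma_1))\subseteq(F(M_2),\Gamma(\sigma_2))$ that commutes with the automorphisms. In the other direction, every extension $(N_2,\tau)\supseteq(\mathcal{N},\Gamma(\sigma))$ is elementary in $L$, and applying $G$ to it yields an extension $(G(N_2),G(\tau))\supseteq (G(\mathcal{N}),G(\Gamma\sigma))$. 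Composing with the definable isomorphism $\mathcal{M}\cong G(F(\mathcal{M}))$, this becomes an extension of $(\mathcal{M},\sigma)$. The key check is that this composite is compatible with $\sigma$, i.e.\ that $G(\Gamma(\sigma))$ corresponds to $\sigma$ under this isomorphism. This holds because the isomorphism is definable over $\emptyset$, so it commutes with every automorphism.

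The second step translates formulas. Let $\varphi(y)$ be a quantifier-free $L'_\sigma$-formula over $\mathcal{N}$ in the language of $\mathcal{N}$, and suppose that $\exists y\,\varphi(y)$ holds in some extension $(N_2,\tau)$. I translate $\varphi$ into an $L'_\sigma$-formula $\widetilde\varphi(\bar x)$ over $\mathcal{M}$ in the language of $\mathcal{M}$. Each variable $y$ is replaced by a tuple $\bar x$ ranging over the domain of $F$, each $L'$-atomic formula by its $F$-translation (a single $L'$-relation, thanks to Morleyisation), and each term $\sigma^j(y)$ by $\sigma^j(\bar x)$. Equality and the ambient relations are read modulo the defining equivalence relation. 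With this translation, $\widetilde\varphi$ is quantifier-free and satisfies
\[
(M_1,\sigma_1)\models\exists\bar x\,\widetilde\varphi(\bar x)\iff (F(M_1),\Gamma(\sigma_1))\models\exists y\,\varphi(y)
\]
for all extensions, which is the analogue of (\ref{star}) in the proof of Theorem~\ref{kgen}.

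Putting these together, I pull $(N_2,\tau)$ back to an extension $(M_2,\sigma_2)$ of $(\mathcal{M},\sigma)$ as in the first step. By the displayed equivalence, $\exists\bar x\,\widetilde\varphi$ holds in $(M_2,\sigma_2)$. Existential closedness of $(\mathcal{M},\sigma)$ then gives a witness in $\mathcal{M}$, and pushing it forward gives a witness of $\varphi$ in $\mathcal{N}$. I expect the main obstacle to be the bookkeeping in the first step: the pulled-back extension must really extend $(\mathcal{M},\sigma)$ rather than merely an isomorphic copy of it, and in the second step, parameters from $\mathcal{N}$ must be replaced by representatives in $\mathcal{M}$. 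Both are handled by the $\emptyset$-definability of the bi-interpretation isomorphisms, and the rest is routine.
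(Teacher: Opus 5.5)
Your proposal is correct, and it is the standard translation argument (the analogue of $(**)$ in the proof of Theorem~\ref{kgen}) that the paper leaves to the reader. One step is used silently in your last paragraph: to apply the displayed equivalence to $(M_2,\sigma_2)$ you need $(F(M_2),\Gamma(\sigma_2))\cong(N_2,\tau)$ over $\mathcal{N}$, and this follows from the other $\emptyset$-definable isomorphism $\mathcal{N}\cong F(G(\mathcal{N}))$, whose defining formula still gives an isomorphism $N_2\cong F(G(N_2))$ (as $\mathcal{N}\preccurlyeq N_2$) that commutes with $\tau$ and agrees with your identifications on $\mathcal{N}$.
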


\section{Tight and supertight automorphisms}\label{sectands}

We assume in this section that $M=G$ is a stable group. The definition of tightness from \cite[Definition 4.1]{pinulla}, originally introduced for groups of finite Morley rank, generalizes naturally to the present context. Throughout this paper, ``definable'' means ``definable with parameters''; when we wish to specify a parameter set $A$, we often say ``$A$-definable''.
\begin{definition}\label{deftight}
We say that $\sigma\in \aut(G)$ is \emph{tight}, if for any connected definable subgroup $H\leqslant G$ such that $\sigma(H)=H$, we have that $\fix(\sigma|_H)$ is not contained in any proper definable subgroup of $H$.
\end{definition}
\begin{remark}\label{remtight}
We comment here on the definition of tightness.
\begin{enumerate}
\item Since $\omega$-stable groups satisfy the descending chain condition (dcc) on definable subgroups, for any $A \subseteq G$ there exists a smallest definable subgroup containing $A$, which we denote by  $\cld(A)$. According to \cite[Definition 4.1]{pinulla}, $\sigma\in \aut(G)$ is said to be tight if, for any connected definable subgroup $H\leqslant G$  such that $\sigma(H)=H$, we have:
$$\cld\left(\{g\in H\ |\ \sigma(g)=g\}\right)=H.$$
That is, $\fix(\sigma|_H)$ must be ``cld-dense'' in $H$. Clearly, in the context of $\omega$-stable groups this is just another way of phrasing Definition \ref{deftight}.

\item If $G=H(K)$, where $K$ is an algebraically closed field and $H$ is an algebraic group over $K$, then a subgroup of $G$ is ``cld-dense'' in $G$ if and only if it is Zariski dense in $G$.

\item It is worth noting that under Definition \ref{deftight}, the identity automorphism on $G$ is tight. This may be viewed as a weakness of this particular definition of tightness.
\end{enumerate}
\end{remark}
The notion of tightness can be strengthened in the style of $\sigma$-varieties (see e.g. \cite{KP4}).
\begin{definition}\label{sharp}
We say that $\sigma\in \aut(G)$ is \emph{tight$^{\sharp}$}, if for any connected group $H$ which is interpretable in $G$ and any interpretable epimorphism $f:H\to \sigma(H)$, we have that $(H,f)^{\sharp}$ is not contained in any proper definable subgroup of $H$, where $$(H,f)^{\sharp}:=\{g\in H\ |\ \sigma(g)=f(g)\}.$$
\end{definition}
\begin{remark}\label{remsharp}
We comment here on the definition of tightness$^{\sharp}$.
\begin{enumerate}
\item If $\sigma$ is tight$^{\sharp}$, then it is also tight, as we may take $f=\id_H$ for any definable subgroup $H\leqslant G$  such that $\sigma(H)=H$. Thus, the definition of tightness$^{\sharp}$ significantly relaxes the restrictions found in the original definition of tightness.
    For instance, $H$ no longer needs to be definable (over $\fix(\sigma)$) anymore; it may be merely interpretable. Furthermore, the identity map on $H$ can be replaced by any interpretable epimorphism $f: H \to \sigma(H)$.

\item Under this new definition, the identity map on $G$ is no longer tight$^{\sharp}$. More generally, any definable automorphism of $G$ fails to be tight$^{\sharp}$, except in the case where $G$ is a vector space over $\mathbb{F}_2$.

\item In the context of $\omega$-stable theories, Definition \ref{sharp} translates as ``the $\sharp$-points are cld-dense'' and it is motivated by the properties of the ``$\sharp$-points'' of $\sigma$-varieties (see \cite{KP4} and \cite{kammos}).
\end{enumerate}
\end{remark}
The next result says that generic automorphism from Section \ref{secgeneric} satisfy the stronger form of tightness from Definition \ref{sharp}.
\begin{theorem}\label{gentight}
Any generic automorphism of $G$ is tight$^{\sharp}$.
\end{theorem}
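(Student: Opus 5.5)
Fix a generic $\sigma\in\aut(G)$, a connected group $H$ interpretable in $G$ (over a finite tuple $c\subseteq G$), an interpretable epimorphism $f:H\to\sigma(H)$ (over $c$ and $\sigma(c)$), and a proper definable subgroup $K<H$, defined over a tuple $d$. We must show $(H,f)^{\sharp}\not\subseteq K$. Since $G$ is existentially closed in the sense of Definition \ref{defgen}, and "there is $g\in H\setminus K$ with $\sigma(g)=f(g)$" is an existential $L'_\sigma$-statement over $c,\sigma(c),d$ (after Morleyisation; imaginaries are handled by quantifying over representatives in $G$), it suffices to produce an extension $(\Uu,\rho)$ of $(G,\sigma)$ in which such a $g$ exists.

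To build $\rho$, let $p\in S_H(G)$ be the type of a generic element of $H$ over $G$; connectedness gives a unique generic type. Let $a\models p$ in $\Uu$. The pushforward $f(p)$ is a type of an element of $\sigma(H)$ over $G$. Because $f$ is a definable epimorphism of connected groups, $f(a)$ is a generic of $\sigma(H)$ over $G$, and $\sigma(H)$ is connected (it is the image of $H$ under an automorphism). Hence $f(p)=\sigma(p)$: both are the unique generic type of $\sigma(H)$ over $G$. This yields an elementary map $G\cup\{a\}\to G\cup\{f(a)\}$ extending $\sigma$ and sending $a\mapsto f(a)$. By saturation and homogeneity of $\Uu$ it extends to some $\rho\in\aut(\Uu)$; here stability is used only to guarantee uniqueness of the generic type. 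Then $\rho(a)=f(a)$, so $a\in(H(\Uu),f)^{\sharp}$ computed with respect to $\rho$. Moreover $a\notin K$: the generic of a connected group cannot lie in a proper definable subgroup, since such a subgroup has infinite index and so is not generic.

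Existential closedness of $(G,\sigma)$ in $(\Uu,\rho)$ then transfers the witness back to $G$, giving $g\in H\setminus K$ with $\sigma(g)=f(g)$.

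The main obstacle is the step $f(p)=\sigma(p)$. One has to check that the image of a generic under a definable surjective homomorphism is generic, and that $\sigma$ maps the generic type of $H$ to the generic type of $\sigma(H)$. The latter holds because $\sigma$ is an automorphism of $G$, hence preserves genericity over $G$. For the former, I would use the standard fact that if $a$ is generic over $G$ and $b\in\sigma(H)$ is arbitrary with $b=f(h)$, then $ha$ is still generic, so translates of $f(a)$ stay generic. A secondary technical point is the treatment of imaginaries in the existential formula. I would address it by working in $\Uu^{eq}$, or equivalently by noting that $L'$-automorphisms extend uniquely to imaginaries.
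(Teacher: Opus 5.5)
Your proof is correct and takes essentially the same route as the paper. You take a generic $a$ of the connected group $H$ over $G$, use that $f(a)$ realizes the generic type of $\sigma(H)$, which equals $\sigma(p)$, extend $\sigma\cup\{(a,f(a))\}$ to some $\rho\in\aut(\Uu)$, note $a\notin K$, and pull the witness back to $G$ by existential closedness. The paper runs the same argument as a proof by contradiction and leaves the identification $\sigma(p_H)=p_{\sigma(H)}$ implicit, which you spell out.
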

\begin{proof}
Let $G$ and $\sigma$ be as above and assume that $\sigma$ is generic. We take $\Uu$ from Assumption \ref{assstable} for $G=M$.

Suppose now that $\sigma$ is not tight$^{\sharp}$ which is witnessed by $(H,f)$ and an interpretable $H_0\lneqq H$ such that
$$(H,f)^{\sharp}\subseteq H_0.$$
We will reach a contradiction. Let $p_H$ be the generic type of $H$. Since $f:H\to \sigma(H)$ is a definable epimorphism, it takes generic (in the sense of the theory of stable groups) subsets of $H$ to generic subsets of $\sigma(H)$. Therefore, we have:
$$f\left(p_H(\Uu)\right)= p_{\sigma(H)}(\Uu),$$
where the interpretation of $f$ in $\Uu$ is denoted by $f$ as well. Hence for any $h\in p_H(\Uu)$, there is $\sigma_h\in \aut(\Uu)$ extending $\sigma$ and such that
$$\sigma_h(h)=f(h).$$
 Since $(H,f)^{\sharp}$ (see Definition \ref{sharp}) is a quantifier-free definable $L'_{\sigma}$-definable set and $(M,\sigma)$ is existentially closed in $(\Uu,\sigma_h)$ (by genericity of $\sigma)$, we get
 $$(H,f)^{\sharp}(\Uu,\sigma_h)\subseteq H_0(\Uu).$$
 However, $h\in (H,f)^{\sharp}(\Uu,\sigma_h)$ and $h\notin H_0(\Uu)$ (since $h$ is generic over $G$ in the connected group $H(\Uu)$ and $H_0(\Uu)$ is a proper $G$-definable subgroup of $H(\Uu)$), a contradiction.
\end{proof}
\begin{remark}
The result above still captures relatively few of the strong properties of generic automorphisms. In Section~\ref{secaxiom}, we will show that such automorphisms satisfy (at least in the context of finite Morley rank) ``ACFA-like'' axioms. Furthermore, in Section~\ref{secalggps}, we will see that tightness is implied by only a very small part of these axioms.
\end{remark}
Following \cite[Definition 4.1]{pinulla}, we also consider the following strengthening of the notion of tightness.
\begin{definition}\label{defstight}
An automorphism $\alpha$ of a stable group $G$ is called \emph{supertight} (resp. \emph{supertight}$^{\sharp}$) if both of the following hold.
\begin{enumerate}
\item For any $n>0$, $\alpha^n$ is a tight (resp. tight$^{\sharp}$) automorphism of $G$.
\item  For any $m,n>0$, if $m|n$ then $\fix(\alpha^m) \lneqq  \fix(\alpha^n)$.
\end{enumerate}
\end{definition}

We provide below the ``model-theoretic step'' from the introduction to \cite{pinulla}.
\begin{theorem}\label{gensup}
Any generic automorphism is supertight$^{\sharp}$; in particular, it is also supertight.
\end{theorem}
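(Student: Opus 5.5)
The plan is to combine Theorem \ref{kgen} with Theorem \ref{gentight}, checking the two clauses of Definition \ref{defstight} separately for a generic $\sigma\in\aut(G)$.

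For clause (1), fix $n>0$. By Theorem \ref{kgen}(1) applied with $k=n$, the automorphism $\sigma^n$ is again generic. For $n=1$ this is trivial; for $n>1$ pick any $l>n$ so that the hypothesis $1\leqslant k<l$ is met. Theorem \ref{gentight} holds for an arbitrary generic automorphism of the stable group $G$, so applying it to $\sigma^n$ shows that $\sigma^n$ is tight$^{\sharp}$. One point needs checking: Definition \ref{sharp} for $\sigma^n$ is stated with $\sigma^n(H)$ and an epimorphism $f:H\to\sigma^n(H)$. This is exactly the instance of Definition \ref{sharp} obtained by taking $\sigma^n$ as the automorphism in question. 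Hence no reformulation is needed.

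For clause (2), let $m,n>0$ with $m\mid n$. If $m<n$, Theorem \ref{kgen}(2) with $k=m$ and $l=n$ gives $\fix(\sigma^m)\lneqq\fix(\sigma^n)$ directly. If $m=n$, the strict inclusion is false, so Definition \ref{defstight}(2) must be read, as in \cite{pinulla}, for proper divisors $m\mid n$ with $m<n$. I will note this convention explicitly. Together these show that $\sigma$ is supertight$^{\sharp}$.

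The final clause then follows from Remark \ref{remsharp}(1): each tight$^{\sharp}$ automorphism $\sigma^n$ is tight, and clause (2) is identical in both notions, so $\sigma$ is supertight.

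The argument is essentially bookkeeping. The only real content is Theorem \ref{kgen}(1), which is where stability enters, through Lemma \ref{tensor} and stationarity. The one point needing care is the $m=n$ degenerate case of clause (2).
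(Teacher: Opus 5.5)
Your proposal is correct and matches the paper's proof: clause (1) comes from Theorem \ref{kgen}(1) combined with Theorem \ref{gentight} applied to $\sigma^n$, clause (2) from Theorem \ref{kgen}(2), and the final claim from Remark \ref{remsharp}(1). You are also right that Definition \ref{defstight}(2) has to be read for proper divisors $m\mid n$ with $m<n$, which matches the hypothesis $k<l$ in Theorem \ref{kgen}(2).
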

\begin{proof}
Item (1) from Definition \ref{defstight} follows from Theorem \ref{kgen}(1) and Theorem \ref{gentight}. Item (2) follows from Theorem \ref{kgen}(2). The ``in particular'' part follows from Remark \ref{remsharp}(1).
\end{proof}
We obtain that there are plenty of supertight automorphisms.
\begin{corollary}\label{exist}
Let $G$ be a stable group and $\sigma\in \aut(G)$. Then there is an elementary extension $G\preccurlyeq \widetilde{G}$ and $\widetilde{\sigma}\in \aut(\widetilde{G})$ such that $\widetilde{\sigma}$ is supertight and $\widetilde{\sigma}$ extends $\sigma$.

Moreover, $\widetilde{\sigma}$ can be taken as supertight$^{\sharp}$ or generic.
\end{corollary}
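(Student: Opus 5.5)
The plan is to combine Lemma \ref{ec} with Theorem \ref{gensup}. First, apply Lemma \ref{ec} to the stable structure $M:=G$ (a group, so Assumption \ref{assstable} holds) and the given $\sigma\in \aut(G)$. This produces an elementary extension $G\preccurlyeq \widetilde{G}$ together with $\widetilde{\sigma}\in\aut(\widetilde{G})$ extending $\sigma$ such that $\widetilde{\sigma}$ is generic, i.e.\ $(\widetilde{G},\widetilde{\sigma})$ is an existentially closed model of $T'_{\sigma}$.

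Next, I will check that $\widetilde{G}$ is itself a stable group. Indeed, $\thh(\widetilde{G})=\thh(G)$ is stable, and the Morleyised theory $T'$ and the language $L'_\sigma$ are unchanged. So we may rerun the whole setup of Sections \ref{secgeneric} and \ref{sectands} with $M:=\widetilde{G}$, taking a monster model $\Uu$ saturated over $\widetilde{G}$.

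Theorem \ref{gensup} then applies to $\widetilde{\sigma}$. It shows that $\widetilde{\sigma}$ is supertight$^{\sharp}$, and hence supertight by Remark \ref{remsharp}(1). The same $\widetilde{\sigma}$ therefore witnesses all three conclusions at once: it is generic, supertight$^{\sharp}$, and supertight.

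There is essentially no obstacle here. The only point needing care is that genericity in Definition \ref{defgen} refers to existential closedness among models of $T'_\sigma$. This depends only on the theory, so the notion "generic" for $\widetilde{\sigma}$ as produced by Lemma \ref{ec} coincides with the one used in Theorems \ref{kgen} and \ref{gentight} for the structure $\widetilde{G}$.
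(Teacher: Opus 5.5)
Your proposal is correct and follows the paper's own proof: Lemma \ref{ec} provides a generic extension $\widetilde{\sigma}$ on $G\preccurlyeq\widetilde{G}$, and Theorem \ref{gensup} then shows it is supertight$^{\sharp}$, hence supertight. Your extra check that $\widetilde{G}$ has the same (stable) theory, so that the same notion of genericity applies to it, is a sound detail that the paper leaves implicit.
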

\begin{proof}
It follows from Theorem \ref{gensup} and Lemma \ref{ec}.
\end{proof}

\section{Some properties of generic automorphisms}\label{secbetter}
In this section, we discuss several issues related to genericity and tightness. We also answer a question from \cite{pinulla}.

\subsection{Axiomatization}\label{secaxiom}
Let us assume that $T$ is an $\omega$-stable theory. In this subsection, we collect from the literature some results regarding axioms for generic automorphisms. We recall the following definition (see e.g. \cite[Definition 1.1]{KiPi}).
\begin{definition}
The theory $T$ has the \emph{Definable Multiplicity Property}, abbreviated \emph{DMP}, if for any formula $\phi(x;y)\in L$ and any $a,a'\in \Uu$, there is a formula $\theta(y)\in \tp(a)$ such that whenever
$\Uu\models \theta(a')$ then we have:
$$\mathrm{RM}\left(\phi(x;a')\right)=\mathrm{RM}\left(\phi(x;a)\right),\ \ \ \ \mathrm{DM}\left(\phi(x;a')\right)=\mathrm{DM}\left(\phi(x;a)\right),$$
where DM is the Morley degree.
\end{definition}
The following was shown in \cite{KiPi} in the strongly minimal case and then generalized in \cite{BGH}. For the notion of an \emph{almost $\aleph_1$-categorical} theory, we refer the reader to \cite[Definition 4.1]{BGH}. By \cite[Fact 4.2]{BGH}, any almost $\aleph_1$-categorical theory has finite Morley rank.
\begin{theorem}[Corollary 4.11 in \cite{BGH}]\label{dmpgen}
Suppose that $T$ is almost $\aleph_1$-categorical, for example, $T = \mathrm{Th}(G)$ for $G$ a group of
finite Morley rank. Then the class of models of $T$ together with generic automorphisms is elementary if and only if $T$ has DMP.
\end{theorem}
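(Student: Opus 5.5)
This is a result from the literature (Corollary 4.11 in \cite{BGH}), so I would reconstruct the argument along the lines of \cite{KiPi} and \cite{BGH}. The aim is to characterize existential closedness of $(M,\sigma)$ by first-order axioms in the style of ACFA, and to show those axioms can be written down exactly when ranks and degrees are definable.

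\emph{DMP implies elementarity.} The candidate axiom scheme $T_A$ says the following. For every $L$-formula $\phi(x,\sigma(x);c)$ defining a set $E\subseteq V\times \sigma(V)$, where $V$ is defined by $\psi(x;c)$ of Morley degree $1$, suppose $E$ projects generically onto $V$ and onto $\sigma(V)$, i.e.\ $\mathrm{RM}(E)=\mathrm{RM}(V)=\mathrm{RM}(\sigma(V))$ with the projections having rank-preserving images. Then for every formula $W\subseteq E$ of smaller rank there is $a$ with $(a,\sigma(a))\in E\setminus W$.

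To see that $T_A$ is first-order, I use DMP together with finite Morley rank (almost $\aleph_1$-categoricity gives this by \cite[Fact 4.2]{BGH}). DMP makes the conditions ``$\mathrm{RM}=r$'' and ``$\mathrm{DM}=d$'' definable in the parameter, so the hypotheses of each instance become a first-order condition on $c$. This yields one axiom for each tuple of formulas and each rank value.

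I would then check the following.
\begin{enumerate}
\item Existentially closed models satisfy $T_A$. The proof is the same amalgamation as in the Claim of Theorem~\ref{kgen}. Take $(a,b)$ generic in $E$ over $M$. Then $a$ realizes the generic type of $V$ and $b$ realizes the generic type of $\sigma(V)$, which is $\sigma$ of that type. So there is $\tau\in\aut(\Uu)$ extending $\sigma$ with $\tau(a)=b$, and existential closedness pulls the witness back into $M$.
\item Models of $T_A$ are existentially closed. Given a quantifier-free $L'_\sigma$-formula realized in an extension $(N,\tau)$ by a tuple $e$, let $a=(e,\tau(e),\dots,\tau^m(e))$. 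Take the locus $V$ of $\tp(a/M)$ and the locus $E$ of $\tp(a,\tau(a)/M)$. Using finite rank, choose formulas of correct rank and degree, and apply the axiom to find a realization in $M$ avoiding the lower-rank sets that would falsify the formula. This uses that types over models are stationary, so loci have degree $1$.
\end{enumerate}

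\emph{Elementarity implies DMP.} I expect this converse to be the main obstacle. Suppose DMP fails for $\phi(x;y)$. Then there are parameters $a_i$ with multiplicity $\mathrm{DM}(\phi(x;a_i))$ varying, yet converging in type to some $a$ where the rank or degree jumps. I would build, by compactness, a structure in which a definable set of given rank and degree is split by $\sigma$ into components that $\sigma$ permutes cyclically with arbitrarily long orbit in ultrapowers of generic pairs. In this structure a certain existential statement holds in every generic $(M_i,\sigma_i)$ but fails in an ultraproduct $\prod(M_i,\sigma_i)/\mathcal U$. The strategy is as in the strongly minimal case of \cite{KiPi}: use a failure of uniform finiteness of the number of components, arrange that $\sigma$ permutes the components of $\phi(x;a_i)$ cyclically with no fixed component, and deduce that the ultraproduct, in which the components merge, is not existentially closed.

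The delicate point is producing these $\sigma_i$ via Lemma~\ref{tensor}-style amalgamation while keeping each $(M_i,\sigma_i)$ generic, and then exhibiting the failing existential formula in the limit. If this gets too technical, I would simply cite \cite[Corollary 4.11]{BGH}, as the paper does.
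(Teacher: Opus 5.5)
The paper does not prove this statement. It is quoted from \cite[Corollary 4.11]{BGH}, followed only by the axiom scheme of \cite{ChPi} and the remark that generic automorphisms satisfy it even without DMP. Your fallback citation is therefore exactly the paper's treatment, but the reconstruction you offer has genuine gaps in both directions.

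\textbf{DMP $\Rightarrow$ elementarity.} Your scheme $T_A$ is mis-specified in two ways.
\begin{enumerate}
\item You require Morley degree $1$ of $V$ but not of $E$, and then the scheme is false in every model. For $K\models\mathrm{ACF}$ take $V=K$, $E=(K\times\{0\})\cup(\{0\}\times K)$ and $W=\{(0,0)\}$. Then $E$ has rank $1$ and both projections are onto $K$, but $(a,\sigma(a))\in E\setminus W$ would force $\sigma(a)=0\neq a$ or $a=0\neq\sigma(a)$. Correspondingly, your claim in step (1) that a generic point of $E$ projects to generic points of both $V$ and $\sigma(V)$ fails here. With $\mathrm{DM}(E)=1$ that step is fine; it is really the homogeneity argument of Theorem~\ref{gentight}, not the amalgamation of Theorem~\ref{kgen}.
\item Demanding $\mathrm{RM}(E)=\mathrm{RM}(V)$ makes step (2) collapse, because the locus of $(a,\tau(a))$ over $M$ generally has larger rank than the locus of $a$. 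For example, take $e$ transformally transcendental over a model $M$ of ACF. For every window $a=(e,\tau(e),\ldots,\tau^m(e))$ we have $\mathrm{RM}(a,\tau(a)/M)=m+2>m+1=\mathrm{RM}(a/M)$, so your axiom never applies.
\end{enumerate}
What is needed is the scheme reproduced in the paper: $W\subseteq V\times\sigma(V)$ of rank $n+m$ and degree $1$, projections of rank $n$, all fibres of rank $m$. Step (2) then goes through after shrinking the locus so that the fibre conditions hold, which uses additivity of Morley rank. DMP is exactly what makes all these hypotheses first-order in the parameters.

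\textbf{The converse.} Here you do not yet have an argument, for two reasons.
\begin{enumerate}
\item To have $\sigma_i$ fix $a_i$ and permute the top-rank components of $\phi(x;a_i)$ with no fixed component, you need an automorphism over $a_i$ acting on these components without fixed points. Failure of DMP only supplies parameters $a_i$ near $a$ of different degree. Nothing prevents the extra components from being $a_i$-definable, by formulas of growing complexity, in which case every $\sigma_i$ fixing $a_i$ fixes every component.
\item Even granting the cyclic action, the property you want to carry to the ultraproduct does not transfer. That property is: every $\sigma_i$-fixed point of $\phi(x;a_i)$ lies in the union $Z_i$ of pairwise intersections of components, a set of rank $<n$. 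It is witnessed by sets $Z_i$ that are not uniformly definable in $i$, precisely because DMP fails. So it says nothing about $\prod_{\mathcal U}(M_i,\sigma_i)$, and you never exhibit the $L_\sigma$-sentence separating the factors from the ultraproduct.
\end{enumerate}
This non-uniformity is exactly what the converse has to overcome. As written, that direction of your proposal rests, like the paper, entirely on the citation.
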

The axioms for the class of models with generic automorphism appearing in Theorem \ref{dmpgen} were specified in \cite{ChPi} (see Item (ii) in the beginning of \cite[3.11]{ChPi}) and we reproduce them below.
\parm
\emph{Whenever $V$ is a set of Morley rank $n$ and Morley degree 1, and $W$ is a definable subset of $V \times \sigma(V)$
of Morley rank $n + m$ and Morley degree 1 such that the projections of $W$ on each of $V$ and $\sigma(V)$
have Morley rank $n$ and all the fibres of each of these projections all Morley rank $m$, then there is $v\in V$ such that $(v, \sigma(v))\in W$.}
\parm
Following the proof of Theorem~\ref{gentight} (or the original argument in \cite[Theorem 1.1]{acfa1}), it is straightforward to show that any generic automorphism satisfies these axioms. It is worth mentioning that this holds even without DMP, which ensures that the axioms are first-order. These axioms provide a strong form of the tightness$^{\sharp}$ condition; to see this, one should take $V := H$ and $W := \operatorname{graph}(f)$ in the setting of Definition~\ref{sharp}.
\subsection{Algebraic groups}\label{secalggps}
Let $K$ be an algebraically closed field and $\sigma\in \aut(K)$ be such that
\begin{itemize}
  \item the field $\fix(\sigma)$ is pseudofinite,

  \item the induced $\bar{\sigma}\in  \aut(\mathrm{PGL}_2(K))$ is supertight.
\end{itemize}
In this subsection, we address the following.
\begin{question}[Question 6.1 in \cite{pinulla}]\label{q}
Assume that $\sigma$ is as above. When $(K,\sigma)\models \mathrm{ACFA}$?
\end{question}
By Remark \ref{present}(1), we get that $(K,\sigma)\models \mathrm{ACFA}$ if and only if $\sigma$ is generic. We will show that the above assumptions are not enough for genericity of $\sigma$ (Corollary \ref{notenough}) and that genericity of $\sigma$ is equivalent to genericity of $\bar{\sigma}$ (Proposition \ref{equiv}).

We first observe that the pseudofiniteness assumption implies the supertightness assumption above. For an algebraic group $H$ over $K$, we denote by
$$\sigma_K:H(K)\to H(K)$$
the induced automorphism of the group of $K$-rational points of $H$.
\begin{proposition}\label{1implies2}
Let $\sigma\in \aut(K)$ such that $\fix(\sigma)$ is a pseudofinite field. Then, the induced automorphism $\bar{\sigma}=\sigma_{\mathrm{PGL}_2}$ on $\mathrm{PGL}_2(K)$ is supertight.
\end{proposition}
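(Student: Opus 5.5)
The plan is to translate everything into statements about the subfields $F_n:=\fix(\sigma^n)\subseteq K$, where $F:=F_1=\fix(\sigma)$ is pseudofinite. Then check both items of Definition \ref{defstight} using that pseudofinite fields are perfect and PAC, and that finite extensions of pseudofinite fields are again pseudofinite.

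\emph{Step 1: identify $F_n$.} The automorphism $\sigma^n$ restricted to $F_n$ generates a group of automorphisms of $F_n$ of order dividing $n$, with fixed field $F$. By Artin's theorem, $F_n/F$ is a finite Galois extension of degree dividing $n$; in particular $F_n\subseteq F^{\alg}\subseteq K$.

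Next, $\sigma|_{F^{\alg}}\in\gal(F^{\alg}/F)\cong\widehat{\Zz}$, and its fixed field is $F$. Hence the closed subgroup generated by $\sigma|_{F^{\alg}}$ is the whole Galois group, so $\sigma|_{F^{\alg}}$ is a topological generator. It follows that $F_n$ is exactly the unique extension of $F$ of degree $n$ inside $F^{\alg}$. Consequently $[F_n:F]=n$, so $F_m\subsetneq F_n$ whenever $m\mid n$ and $m<n$. Moreover, each $F_n$ is pseudofinite, being a finite extension of a pseudofinite field.

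\emph{Step 2: item (2) of Definition \ref{defstight}.} Since $\mathrm{PGL}_2$ is an affine variety defined over the prime field and $\bar\sigma$ acts coordinatewise, a point of $\mathrm{PGL}_2(K)$ is fixed by $\bar\sigma^n$ if and only if its coordinates lie in $F_n$. Hence $\fix(\bar\sigma^n)=\mathrm{PGL}_2(F_n)$. For $m\mid n$ with $m<n$, pick $a\in F_n\setminus F_m$. The class of the unipotent matrix with upper right entry $a$ then lies in $\mathrm{PGL}_2(F_n)\setminus\mathrm{PGL}_2(F_m)$, which gives the strict inclusion.

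\emph{Step 3: item (1), tightness of each $\bar\sigma^n$.} Let $H\leqslant\mathrm{PGL}_2(K)$ be a connected definable subgroup with $\bar\sigma^n(H)=H$. In the algebraically closed field $K$, definable subgroups of algebraic groups are constructible subgroups, hence Zariski closed. Model-theoretic connectedness then coincides with Zariski connectedness, and "not contained in a proper definable subgroup" means Zariski dense (Remark \ref{remtight}(2)). So we must show that $\fix(\bar\sigma^n|_H)=H(F_n)$ is Zariski dense in $H$.

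I expect the main obstacle to be showing that $H$ is defined over $F_n$. The subvariety $H$ is invariant under $\sigma^n$, so its minimal field of definition is fixed by $\sigma^n$ and hence contained in $F_n$. Because $F_n$ is perfect, there is no inseparability issue in positive characteristic, and $H$ is a smooth, geometrically irreducible algebraic group over $F_n$. Since $F_n$ is pseudofinite, it is PAC, and rational points of a geometrically irreducible variety over a PAC field are Zariski dense. This gives the density of $H(F_n)$ in $H$.

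If the field-of-definition argument needs care, a fallback is to run through the short list of connected algebraic subgroups of $\mathrm{PGL}_2$ (trivial, unipotent, tori, Borels, whole group) and check density of $F_n$-points directly in each case.
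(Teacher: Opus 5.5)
Your proposal is correct and follows essentially the same route as the paper. You show $[\fix(\sigma^n):\fix(\sigma)]=n$ (you via $\sigma|_{F^{\alg}}$ being a topological generator of $\widehat{\Zz}$, the paper via an explicit Galois extension of degree $n$ plus the bound from Claim 2), obtain tightness of each $\bar\sigma^n$ from the field of definition of an invariant connected subgroup lying in the perfect PAC field $\fix(\sigma^n)$, and get the strict inclusions from the unipotent copy of $\ga$ in $\mathrm{PGL}_2$; the only slip is in Step 1, where the group of order dividing $n$ with fixed field $F$ is generated by $\sigma|_{F_n}$, not by $\sigma^n|_{F_n}$, which is the identity.
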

\begin{proof}
Let us denote $C:=\fix(\sigma)$. Actually, the proof will work for many connected algebraic groups over $C$ instead of $\mathrm{PGL}_2$ (see Remark \ref{tricky}(2)). Let us recall here that by a result of Ax (see \cite[Corollary 23.10.5]{FrJa}), $C$ is perfect, PAC and the absolute Galois group of $C$ coincides with the profinite completion of $\Zz$. Since $C$ is not algebraically closed, we regard algebraic groups over $C$ as appropriate functors and do not identify them with the groups of rational points. Let us fix an arbitrary connected algebraic group $G$ over $C$.
%We denote by $G(\sigma)$ the induced automorphism of $G(K)$.

\medskip
\textbf{Claim 1}
\\
The automorphism $\sigma_G$ is tight.
\begin{proof}[Proof of Claim 1]
Take any connected $C$-definable subgroup $T\leqslant G(K)$. Since $C$ is perfect, such $T$ coincides with $H(K)$, where $H$ is an algebraic subgroup of $G$ which is defined over $C$ (see e.g. Item 2. at the beginning of \cite[Section 4.5]{Po1}). Since $H$ is algebraic, we have
\begin{equation}
  \tag{$***$}
\fix\left(\sigma_G|_{H(K)}\right)=\fix(\sigma_H)=H(C).
\label{star0}
\end{equation}
(The first equality is obvious, but the second is a bit tricky, see Remark \ref{tricky}(2)). Since $C$ is PAC, $H(C)$ is Zariski dense in $H(K)$ by \cite[Proposition 12.1.1]{FrJa}, which gives tightness using Remark \ref{remtight}(2).
\end{proof}
The next claim is very general and obvious. We will see soon that we get the equality there after using the pseudofiniteness assumption.

\medskip
\textbf{Claim 2}
\\
For all $n>0$ we have
$$[\fix(\sigma^n):C]\leqslant n.$$
\begin{proof}[Proof of Claim 2]
It follows from basic Galois theory, since
$$C=\fix\left(\sigma|_{\fix(\sigma^n)}\right)$$
and $\sigma$ has order at most $n$ after restricting to $\fix(\sigma^n)$.
\end{proof}
Since any finite extension of a pseudofinite field is again pseudofinite, $\sigma_G$ satisfies Item (1) from the definition of supertightness (Definition \ref{defstight}) by Claims 1 and 2.

To show Item (2) from Definition \ref{defstight}, we will use some properties of $\mathrm{PGL}_2$ (however, see Remark \ref{tricky}(2)). We need the following first.

\medskip
\textbf{Claim 3}
\\
For each $n\neq m$, we have $\fix(\sigma^n)\neq \fix(\sigma^m)$.
\begin{proof}[Proof of Claim 3]
It is enough to show that for each $n>0$, we have $[\fix(\sigma^n):C]=n$. By Claim 2, it is enough to show that $[\fix(\sigma^n):C]\geqslant n$. Since $C$ is pseudofinite, it has a Galois extension $C\subseteq C'$ of degree $n$. Then $\sigma|_{C'}\in \gal(C'/C)$ and $|\gal(C'/C)|=n$, so we have
$$\sigma^n|_{C'}=\left(\sigma|_{C'}\right)^n=\id_{C'}.$$
Hence we get $C'\subseteq \fix(\sigma^n)$ and $[\fix(\sigma^n):C]\geqslant n$ indeed.
\end{proof}
Similarly as in $(***)$ above, for any $n>0$ we have
$$\fix\left(\sigma^n_{\mathrm{PGL}_2}\right)=\mathrm{PGL}_2(\fix(\sigma^n)).$$
So, it is enough to show that for $n\neq m$, we have
$$\mathrm{PGL}_2(\fix(\sigma^n))\neq \mathrm{PGL}_2(\fix(\sigma^m)).$$
By Claim 3, the above holds for the additive group $\ga$ in place of $\mathrm{PGL}_2$. Since there is an algebraic subgroup of $\mathrm{PGL}_2$ (consisting of cosets of the upper unitriangular matrices) isomorphic to $\ga$, we can conclude the proof, since all the data here  is defined over $C$ (and even over the prime subfield of $C$).
\end{proof}
\begin{corollary}\label{notenough}
There is $\sigma\in \aut(\Qq^{\alg})$ such that:
\begin{enumerate}
  \item $\fix(\sigma)$ is a pseudofinite field;
  \item $\sigma$ is not generic;
  \item $\sigma_{\mathrm{PGL}_2}\in \aut(\mathrm{PGL}_2(K))$ is supertight.
\end{enumerate}
\end{corollary}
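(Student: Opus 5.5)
The plan is to exhibit $\sigma\in\aut(\Qq^{\alg})$ whose fixed field is pseudofinite, and then use Proposition~\ref{1implies2} for Item (3) and a cardinality or saturation argument for Item (2). For Item (1), recall that $\gal(\Qq)$ has many procyclic subgroups $\overline{\langle\sigma\rangle}\cong\widehat{\Zz}$ whose fixed fields are pseudofinite. By the theorem of Jarden (see \cite{FrJa}, the section on ``almost all $\sigma$''), for almost all $\sigma\in\gal(\Qq)$ with respect to Haar measure, the field $\Qq^{\alg}(\sigma)=\fix(\sigma)$ is PAC. Moreover, $\fix(\sigma)$ has absolute Galois group $\overline{\langle\sigma\rangle}$, which is $\widehat{\Zz}$ for almost all $\sigma$. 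A perfect PAC field with absolute Galois group $\widehat{\Zz}$ is pseudofinite by Ax's characterization (\cite[Corollary 23.10.5]{FrJa} in reverse). So I would choose $\sigma$ from this measure-one set. Alternatively, I could take an ultraproduct-free route: any pseudofinite field of characteristic $0$ that is algebraic over $\Qq$ arises this way. I will rely on the ``almost all $\sigma$'' theorem.

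Item (3) is then immediate from Proposition~\ref{1implies2}, with $K=\Qq^{\alg}$.

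For Item (2), I argue that no automorphism of $\Qq^{\alg}$ is generic, i.e.\ $(\Qq^{\alg},\sigma)\not\models\mathrm{ACFA}$, using Remark~\ref{present}(1). The key point is that models of ACFA have fixed field of infinite transcendence degree, or at least a transcendental element. The axioms of ACFA (equivalently, Definition~\ref{defgen} applied to the existential sentence below) give this. Take $V=\mathbb{A}^1$ and $W$ the graph of the identity. By existential closedness in an extension $(\Uu,\tau)$, with $\tau$ fixing a transcendental $t$ (which exists by Lemma~\ref{tensor}-style extension, or simply by extending $\sigma$ to $\Qq^{\alg}(t)^{\alg}$ with $t\mapsto t$), we would obtain a fixed element outside any finite set. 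This alone is not enough, since $\Qq^{\alg}$ is infinite.

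A cleaner route is to use the quantifier-free formula $\sigma(x)=x\wedge P(x)\neq0$ for all nonzero polynomials. This is not first order, so instead I use saturation-free reasoning. Existential closedness in the extension $(\Qq^{\alg}(t)^{\alg},\tau)$ yields solutions only to finitely many conditions, and every element of $\Qq^{\alg}$ is algebraic, so this gives no contradiction directly.

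The main obstacle is therefore Item (2). I expect the best argument to be this: in a model of ACFA, the fixed field $F$ is elementarily equivalent to every fixed field of ACFA with the same algebraic part (by Chatzidakis–Hrushovski, $\mathrm{Th}(F)$ is determined by $\sigma|_{\Qq^{\alg}}$). I would then find a first-order property that fails for algebraic fields. For example, ACFA implies the fixed field contains $x$ with $x\notin$ the relative algebraic closure of the prime field in it. This is expressible only as an infinite scheme, but the scheme is satisfied in $(\Qq^{\alg}(t)^{\alg},\tau)$ and each finite piece is existential. Each finite piece is then satisfiable in $(\Qq^{\alg},\sigma)$, which is again no contradiction.

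So I would switch to a genuine failure: use $V=\mathbb{A}^1$ and $W=\{(x,y): y=x+1\}$ in characteristic $0$. ACFA gives $x$ with $\sigma(x)=x+1$. In $\Qq^{\alg}$, $x$ is algebraic over $\Qq$, so $\sigma$ permutes its finitely many conjugates, and hence $\sigma^N(x)=x$ for some $N\geqslant 1$. But $\sigma^N(x)=x+N\neq x$ in characteristic $0$, a contradiction. This settles Item (2) cleanly, and so the main difficulty really lies in Item (1) and the citation of Jarden's theorem.
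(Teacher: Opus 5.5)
Your proposal is correct. Items (1) and (3) follow the paper's route: the paper also picks $\sigma$ via Jarden's measure-theoretic result (it cites \cite[Theorem 23.5.1]{FrJa} with $e=1$), which is the PAC plus $\widehat{\Zz}$ argument you combine with Ax's characterization, and then applies Proposition~\ref{1implies2}.

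For Item (2) your argument is genuinely different and more elementary.

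\begin{itemize}
\item The paper cites the structural fact from \cite{HrFro} that the field underlying any model of $\mathrm{ACFA}_0$ has infinite transcendence degree over $\Qq$. So $\Qq^{\alg}$ cannot carry a generic automorphism.
\item You use one instance of existential closedness instead. Extend $\sigma$ to $\Qq^{\alg}(t)^{\alg}$ with $t\mapsto t+1$; this is an elementary extension by model completeness of ACF. Equivalently, apply the ACFA axiom to the graph of $x\mapsto x+1$. Either way you get a solution of $\sigma(x)=x+1$ in $\Qq^{\alg}$.
\item That solution is impossible: every element of $\Qq^{\alg}$ has finitely many conjugates and is therefore $\sigma$-periodic, while $\sigma^N(x)=x+N\neq x$ in characteristic $0$.
\end{itemize}

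Both arguments show that no automorphism of $\Qq^{\alg}$ is generic. Yours is self-contained and fits the spirit of Section~\ref{secpf}: for automorphisms of $\Qq^{\alg}$ one has $\mathrm{Per}(\sigma)$ equal to the whole field, whereas a generic automorphism must have non-periodic points. The paper's citation proves a far stronger structural statement than is needed here.

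When you write this up, delete the exploratory false starts that come before the final $x\mapsto x+1$ argument, since none of them is used.
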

\begin{proof}
By Proposition \ref{1implies2}, it is enough to find $\sigma\in \aut(\Qq^{\alg})$ such that
 $\fix(\sigma)$ is a pseudofinite field and $\sigma$ is not generic. By \cite[Theorem 23.5.1]{FrJa} (for $e=1$), there is $\sigma\in \aut(\Qq^{\alg})$ such that $\fix(\sigma)$ is pseudofinite. However, by \cite[Section 13.3]{HrFro} (the second paragraph of the subsection titled `The examples of Cherlin-Jarden') if $(K,\sigma)\models \mathrm{ACFA}_0$, then $K$ has infinite transcendence degree over $\Qq$.
\end{proof}

\begin{remark}\label{tricky}
We comment here on several issues related to the proof of Proposition \ref{1implies2}.
\begin{enumerate}
  \item We used several times in the proof the fact saying that if $H$ is an algebraic group over $C$, then ``$H$ commutes with Fix'' that is:
   $$\fix(\sigma_H)=H(\fix(\sigma)).$$
   This may seem obvious, but it uses the fact the functor (of rational points) given by an algebraic group is \emph{representable}. It is not true for other functors like $\mathrm{PSL}_2$; for example, if $\sigma\in \aut(\Cc)$ is the complex conjugation, then $\fix(\sigma)=\Rr$, however we have (since $\mathrm{PSL}_2(\Cc)=\mathrm{PGL}_2(\Cc)$):
   $$\fix\left(\sigma_{\mathrm{PSL}_2}\right)=\fix\left(\sigma_{\mathrm{PGL}_2}\right)=\mathrm{PGL}_2(\Rr)\ncong \mathrm{PSL}_2(\Rr).$$

  \item In the proof of Proposition \ref{1implies2}, we did not use much about $\mathrm{PGL}_2$ besides
  \pars
  ``there is an algebraic subgroup of $\mathrm{PGL}_2$ isomorphic to $\ga$ and defined over $C$''.
  \pars
Hence, with suitable modifications, the argument likely extends to an arbitrary algebraic group in place of $\mathrm{PGL}_2$.
\end{enumerate}
\end{remark}

The following result provides a direct answer to \cite[Question 6.1]{pinulla} for all simple algebraic groups over algebraically closed fields.
\begin{proposition}\label{equiv}
Let $G(K)$ be an infinite simple algebraic group over an algebraically closed field $K$ and $\sigma\in \aut(K)$. Then, $\sigma$ is generic if and only if $\sigma_G\in \aut(G(K))$ is generic.
\end{proposition}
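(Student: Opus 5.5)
Our plan is to reduce the statement to Proposition \ref{biint} via the classical bi-interpretability of an infinite simple algebraic group with its field. Here $G$ is a simple algebraic group defined over the prime field, so $\sigma_G$ makes sense. We will use the theorem of Zilber (and of Poizat in the form found in \cite{Po1}): if $G(K)$ is an infinite simple algebraic group over an algebraically closed field $K$, then the pure group $G(K)$ and the pure field $K$ are bi-interpretable. The field $K$ is interpreted in $G(K)$ by the Zilber field theorem, for instance via a torus acting on a root subgroup. The group $G(K)$ is interpreted in $K$ as the $K$-points of a variety over the prime field. The composite interpretations are definably isomorphic to the identity. The most delicate point is that these isomorphisms are definable, and for simple algebraic groups this is exactly the content of Poizat's result.

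Next we identify the natural isomorphism $\Gamma:\aut(K)\to\aut(G(K))$ from Proposition \ref{biint}. The interpretation of $G(K)$ in $K$ is given by formulas over the prime field, with no parameters. Hence any field automorphism $\sigma$ acts on the interpreted copy of $G(K)$ by applying $\sigma$ coordinatewise, which is precisely $\sigma_G$. So $\Gamma(\sigma)=\sigma_G$, and $\Gamma$ is an isomorphism.

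Proposition \ref{biint} then finishes the argument: $\sigma$ is generic for $K$ if and only if $\Gamma(\sigma)=\sigma_G$ is generic for $G(K)$. By Remark \ref{present}(1), genericity of $\sigma$ is the same as $(K,\sigma)\models\mathrm{ACFA}$, which matches the phrasing of Question \ref{q}.

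We expect the main obstacle to be parameters. The interpretation of $K$ inside $G(K)$ needs parameters, such as a chosen torus and root subgroup. If the bi-interpretation is only over parameters, Proposition \ref{biint} has to be applied to expansions by constants, and $\Gamma$ is then defined only on automorphisms fixing those constants. To handle this, we will use the fact that $\aut(G(K))$ is generated by inner automorphisms, automorphisms of the form $\sigma_G$, and graph automorphisms. We will compose $\sigma_G$ with a suitable inner automorphism (one fixed by $\sigma$) so that it fixes the parameters. Then we check that genericity is unchanged: an inner automorphism $\iota_g$ with $\sigma_G(g)=g$ commutes with $\sigma_G$, and $(G(K),\sigma_G)$ and $(G(K),\iota_g\circ\sigma_G)$ are quantifier-free interdefinable over $g$. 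Existential closedness is preserved by this interdefinability together with naming a single constant, which does not affect existential closedness.

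If the parameter issue becomes too messy, we will instead choose the parameters inside $G(\mathbb{F})$, where $\mathbb{F}$ is the prime field, for example the diagonal torus and the standard root subgroup of a Chevalley group. These are fixed by every $\sigma_G$. Then Proposition \ref{biint} applies directly to the structures with these constants named, and naming parameters fixed by the automorphism preserves genericity.
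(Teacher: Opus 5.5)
Your proposal matches the paper's proof in substance: the paper simply cites the Zilber--Poizat bi-interpretability of $K$ and $G(K)$ and then applies Proposition~\ref{biint}. Your attention to parameters covers a point the paper leaves implicit, since the map $\sigma\mapsto\sigma_G$ misses the inner automorphisms, so Proposition~\ref{biint} really has to be applied to $G(K)$ expanded by a tuple of constants. Your second option, taking that tuple in $G(\mathbb{F})$ so that every $\sigma_G$ fixes it, is the right fix; your first option relies on an inner automorphism with properties you have not justified and can simply be dropped.
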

\begin{proof}
By \cite{zilbiint} and \cite[Corollary 4.16]{Po1}, the field $K$ and the group $G(K)$ are bi-interpretable, so the result follows directly from Proposition \ref{biint}.
\end{proof}
In particular, we get examples of supertight automorphisms which are not generic.
\begin{remark}
By applying \cite[Corollary 1.2(i)]{segtent}, Proposition~\ref{equiv} can be generalized to arbitrary fields (and potentially beyond using \cite[Theorem 1.1]{segtent}), where the role of the simple algebraic group $G$ is taken by a \emph{Chevalley–Demazure group scheme} over $\mathbb{Z}$ (see \cite{segtent}). Each simple algebraic group over an algebraically closed field arises from a Chevalley-Demazure group scheme and these group schemes are classified e.g. in \cite[Table 9.2]{linlie}.
\end{remark}

\subsection{Pseudofiniteness}\label{secpf}
Let $C=\fix(\sigma)$ where $\sigma\in \aut(M)$ is generic.
We want to discuss the following possible similarities with Ax's characterization of pseudofinite fields (see the beginning of the proof of Proposition \ref{1implies2}).

It is clear that for any $c\in \dcl(C)$, the singleton $\{c\}$ is $\sigma$-invariant, so (since $\sigma\in \aut_{C}(\Uu)$) $\dcl(C)=C$. Therefore, $C$ is a definably closed substructure of $M$ and no assumption on the automorphism $\sigma$ were used here.

Regarding the PAC condition, Hrushovski defines a substructure $F \subseteq M$ to be PAC in the context of a strongly minimal theory (\cite[Definition 1.2]{Hrupfstr}) if every formula over $F$ of Morley degree 1 has a solution in $F$. This definition naturally extends to any $\omega$-stable theory and was further generalized to the stable context in \cite{PiPolk}. PAC substructures of stable structures were also investigated in \cite{HK4}; in particular, by \cite[Lemma 3.7]{HK4}, $C$ is PAC in the sense of \cite[Definition 2.3]{HK4}, a notion which specializes to \cite[Definition 1.2]{Hrupfstr} for the $\omega$-stable case.

Regarding the absolute Galois group case of Ax's description, we should show the following
$$\gal(C):=\aut(\acl(C)/C)=\overline{\langle \sigma|_{\acl(C)}\rangle}\cong \widehat{\Zz}.$$
In the course of the proof, one should show first that
$$\operatorname{Per}(\sigma):=\bigcup_{n>0}\fix(\sigma^n)=\acl(C).$$
Clearly, the inclusion $\acl(C) \subseteq \operatorname{Per}(\sigma)$ always holds; however, the reverse inclusion fails, for instance, in the case of a pure set. To obtain the opposite inclusion, one must assume a version of elimination of imaginaries. The minimal requirement appears to be the coding of finite sets, as in the result below (cf. \cite[Theorem 12]{MedTak}). We recall that $T$ \emph{codes finite sets} if, for any $m > 0$ and any finite $F \subset \Uu^m$, there exists a finite tuple $b$ from $\Uu$ such that for every $\tau \in \aut(\Uu)$, we have $\tau(b) = b$ if and only if $\tau(F) = F$. In this case, $b$ is referred to as the \emph{code} of $F$.

The next result does not use genericity of $\sigma$.
\begin{lemma}\label{eifin}
If $T$ codes finite sets, then we have
$$\acl(C)=\mathrm{Per}(\sigma).$$
\end{lemma}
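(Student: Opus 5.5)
We prove the two inclusions separately. Here $\acl$ and $\dcl$ are computed in $M$ (or in $\Uu$, which makes no difference since $M\preccurlyeq \Uu$ and $C\subseteq M$). As elsewhere, we may extend $\sigma$ to some automorphism of $\Uu$, still denoted $\sigma$; this is used only to apply the coding hypothesis, which is phrased in terms of $\aut(\Uu)$.

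For the easy inclusion $\acl(C)\subseteq \mathrm{Per}(\sigma)$, take $a\in\acl(C)$, witnessed by an $L$-formula $\varphi(x,c)$ with $c$ from $C$ such that $\varphi(\Uu,c)$ is a finite set $F\ni a$. Since $\sigma(c)=c$, we get $\sigma(F)=F$. So $\sigma$ permutes the finite set $F$, and $\sigma^{n}|_F=\id_F$ for $n=|F|!$. Hence $a\in\fix(\sigma^n)\subseteq \mathrm{Per}(\sigma)$.

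For the reverse inclusion, take $a\in\fix(\sigma^n)$ with $n>0$. Let $F:=\{a,\sigma(a),\ldots,\sigma^{n-1}(a)\}$, a finite $\sigma$-invariant subset of $M^m$ (where $a$ is an $m$-tuple, or $m=1$ for singletons). By the coding of finite sets, choose a code $b$ of $F$, i.e. a finite tuple $b$ from $\Uu$ with $\tau(b)=b$ if and only if $\tau(F)=F$ for all $\tau\in\aut(\Uu)$.

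\begin{itemize}
\item Since $\sigma(F)=F$, we get $\sigma(b)=b$.
\item The code is fixed by every $\tau\in\aut(\Uu/F)$, so $b\in\dcl(F)$ by the standard Galois-theoretic characterization of $\dcl$ in the monster model. Since $F\subseteq M$ and $M\preccurlyeq\Uu$, the tuple $b$ lies in $M$.
\item Therefore $b\in\fix(\sigma)$, i.e. every coordinate of $b$ is in $C$.
\item Conversely, every $\tau\in\aut(\Uu/b)$ satisfies $\tau(F)=F$. So the orbit of $a$ under $\aut(\Uu/b)$ is contained in the finite set $F$, which gives $a\in\acl(b)\subseteq\acl(C)$.
\end{itemize}

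The main obstacle is the bookkeeping in the reverse inclusion. We must check that the code $b$, a priori just a tuple of $\Uu$, actually lies in $M$, so that $\sigma$ acts on it and $\sigma(b)=b$ puts $b$ in $C$. We must also justify the passage from ``fixed by all automorphisms fixing $F$'' to definability, and from ``finite orbit over $b$'' to algebraicity. Both are the standard automorphism characterizations of $\dcl$ and $\acl$ in a saturated, strongly homogeneous monster model, applied over small parameter sets; I would cite them rather than reprove them. Genericity of $\sigma$ is not used anywhere, consistent with the remark preceding the lemma.
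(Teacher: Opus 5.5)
Your proof is correct and follows the same route as the paper. Both code the finite $\sigma$-orbit $F=\{a,\sigma(a),\ldots,\sigma^{n-1}(a)\}$, observe that the code is $\sigma$-fixed and hence a tuple from $C$, and conclude $a\in\acl(C)$; you also spell out details the paper leaves implicit, namely the easy inclusion, extending $\sigma$ to $\Uu$, and checking that the code lies in $\dcl(F)\subseteq M$ so that it genuinely belongs to $C$.
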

\begin{proof}
It is enough to show that $\mathrm{Per}(\sigma)\subseteq \acl(C)$. Let $a\in \mathrm{Per}(\sigma)$, so there is $n>0$ such that $\sigma^n(a)=a$. We consider the finite set $F:=\{a,\sigma(a),\ldots,\sigma^{n-1}(a)\}$ and let $b$ be the code of $F$. We have that $\sigma(F)=F$, so (since $T$ codes finite sets) we get $b\in C^k$ for some $k>0$. Therefore $F$ is $C$-definable and $a\in \acl(C)$.
\end{proof}
%If $T'$ eliminates finite tuples, then we have
%$$\fix(\sigma^n)\subseteq \acl(C).$$
%For all $n>0$, we have
%    $$\aut(\fix(\sigma^n)/C)=\Zz/n\Zz=\langle \sigma|_{\fix(\sigma^n)}\rangle.$$
%We always have
%    $$(*)\ \ \ \ \ \langle \sigma|_{\fix(\sigma^n)}\rangle\leqslant \aut(\fix(\sigma^n)/C)$$
%    and using genericity of $\sigma$ we get
%    $$\Zz/n\Zz=\langle \sigma|_{\fix(\sigma^n)}\rangle.$$
% So, the point is how to get the equality in $(*)$.
The result below was stated by Hrushovski in \cite{Hrupfstr} (above Proposition 11.1) within the Shelah's structure $M^{\mathrm{eq}}$ (that is, $M$ together with all the imaginary sorts) and in the context of finite Morley rank with DMP.
\begin{proposition}\label{zhat}
If $T$ codes finite sets and $\sigma$ is generic, then we have
  $$\gal(C)\cong \widehat{\Zz}.$$
\end{proposition}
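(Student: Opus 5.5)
We will show that the restriction map $\gal(C)\to \varprojlim_n \aut(\fix(\sigma^n)/C)$ is an isomorphism, and that each $\aut(\fix(\sigma^n)/C)$ is cyclic of order $n$, generated by $\sigma|_{\fix(\sigma^n)}$, compatibly in $n$. By Lemma \ref{eifin}, $\acl(C)=\mathrm{Per}(\sigma)=\bigcup_n\fix(\sigma^n)$. This is a directed union, since $\fix(\sigma^m)\subseteq\fix(\sigma^n)$ whenever $m\mid n$. Moreover, each $\fix(\sigma^n)$ is invariant under $\aut(\acl(C)/C)$. Indeed, an element $a\in\fix(\sigma^n)$ has finite orbit $F$ under $\langle\sigma\rangle$; the code of $F$ lies in $C$, and $a$ is a root of the $C$-definable finite set $F$. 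So every $\tau\in\gal(C)$ permutes $F$. We then need to check that $\tau$ commutes with $\sigma$ on $F$. This is the delicate point, and I would handle it instead via the next step: first establish that $\fix(\sigma^n)$ is the unique ``degree $n$'' part, namely that it is setwise fixed by $\gal(C)$ because it equals the union of all $C$-definable finite sets whose $\sigma$-orbits have size dividing $n$.

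The core step is to show that $\aut(\fix(\sigma^n)/C)=\langle\sigma|_{\fix(\sigma^n)}\rangle\cong\Zz/n\Zz$, where automorphisms are understood as restrictions of elements of $\aut(\Uu/C)$, i.e.\ elementary maps. The order of $\sigma|_{\fix(\sigma^n)}$ is exactly $n$: it divides $n$, and if it were $k<n$ with $k\mid n$, then $\fix(\sigma^n)=\fix(\sigma^k)$, contradicting Theorem \ref{kgen}(2). For the inclusion ``$\subseteq$'', take an elementary permutation $\tau$ of $D:=\fix(\sigma^n)$ over $C$. By a Galois-correspondence argument in $M^{\mathrm{eq}}$ restricted to the sort coded by finite sets, the fixed substructure of $\langle\sigma|_D\rangle$ in $D$ is exactly $C$ (as $D\cap\fix(\sigma)=C$), and the coding of finite sets makes $\dcl$-closed subsets of $D$ containing $C$ correspond to subgroups. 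Hence Artin-style reasoning yields $|\aut(D/C)|\le n$. Concretely, I would pick a finite tuple $d$ in $D$ whose $\aut(D/C)$-orbit determines everything relevant, code the orbit $O$ of $d$ under $\aut(D/C)$, and compare it with the $\sigma$-orbit of $d$. The key claim is that each $C$-conjugate of $d$ lies in the $\sigma$-orbit. To prove it, I code the finite set $\{d,\sigma d,\dots,\sigma^{n-1}d\}$ by some $b\in C$, so that every $C$-conjugate of $d$ lies in this set. This forces $\tau(d)=\sigma^j(d)$ for some $j$.

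Passing to finite tuples and using a compactness/inverse-limit argument, the groups $\aut(\fix(\sigma^n)/C)\cong\Zz/n\Zz$ form an inverse system with surjective restriction maps sending generator to generator. Therefore
$$\gal(C)=\varprojlim_n\aut(\fix(\sigma^n)/C)\cong\varprojlim_n \Zz/n\Zz=\widehat{\Zz},$$
and $\overline{\langle\sigma|_{\acl(C)}\rangle}$ is the whole group. I expect the main obstacle to be the upper bound $|\aut(D/C)|\le n$, since the argument above only gives $\tau(d)=\sigma^{j(d)}(d)$ with $j$ depending on $d$. To make $j$ uniform, I would apply the same reasoning to a tuple $d$ that generates a large finite piece of $D$ (taking finitely many elements at once), and then pass to the limit using directedness.
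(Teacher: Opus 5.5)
Your proposal is correct and follows the paper's proof. Lemma~\ref{eifin} reduces everything to the sets $\fix(\sigma^n)$, Theorem~\ref{kgen}(2) shows that $\sigma|_{\fix(\sigma^n)}$ has order exactly $n$, and coding the $\sigma$-orbit of a tuple $d$ gives $\tau(d)=\sigma^{j}(d)$ for some $j$. Two of your worries are unnecessary: invariance of $\fix(\sigma^n)$ follows at once from $\tau(a)\in F\subseteq\fix(\sigma^n)$, and the Artin-style Galois-correspondence paragraph is not needed.

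The only real difference is how you make $j$ uniform. Your directedness argument works because the admissible exponents form nonempty subsets of the finite set $\Zz/n\Zz$ that shrink as the tuple grows, so they stabilize. The paper shortcuts this by fixing $a\in\fix(\sigma^n)$ with $a,\sigma(a),\ldots,\sigma^{n-1}(a)$ pairwise distinct, so that $\tau(a)$ determines $k$ uniquely; it then applies the orbit-coding to the pair $(a,b)$ for arbitrary $b$.
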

\begin{proof}
By Theorem \ref{kgen}(2), we have
$$\langle \sigma|_{\fix(\sigma^n)}\rangle\cong \Zz/n\Zz.$$
Hence, by Lemma \ref{eifin}, it is enough to show that
$$\langle \sigma|_{\fix(\sigma^n)}\rangle=\aut(\fix(\sigma^n)/C).$$
Let us take $\tau\in \aut(\fix(\sigma^n)/C)$ and $a\in \fix(\sigma^n)$ such that the elements 
$$a,\sigma(a),\ldots,\sigma^{n-1}(a)$$ 
are pairwise different. Since, as above, the set $\{a,\sigma(a),\ldots,\sigma^{n-1}(a)\}$ is $C$-definable, there is $k\in \{0,1,\ldots,n-1\}$ such that $\tau(a)=\sigma^k(a)$. We will show that
$$\tau=\left(\sigma|_{\fix(\sigma^n)}\right)^k.$$
Assume not and take $b\in \fix(\sigma^n)$ such that $\tau(b)\neq \sigma^k(b)$. As above, the set
$$\{(a,b),\sigma(a,b),\ldots,\sigma^{n-1}(a,b)\}$$
is $C$-definable as well, so there is $l\in \{0,1,\ldots,n-1\}$ such that $\tau(a,b)=\sigma^l(a,b)$. Then we have
$$\sigma^l(a)=\tau(a)=\sigma^k(a),$$
hence, since the elements $a,\sigma(a),\ldots,\sigma^{n-1}(a)$ are pairwise different, we get $k=l$. Therefore, we have
$$\sigma^k(b)=\sigma^l(b)=\tau(b)\neq \sigma^k(b),$$
a contradiction.
\end{proof}
It is not clear for us whether simple groups of finite Morley rank code finite sets. Given that vector spaces fail to code finite sets, the assumption of simplicity has to be used indeed. Nevertheless, the following result suggests that coding finite sets is a natural assumption when working toward the Cherlin–Zilber conjecture.
\begin{proposition}\label{ei}
Let $G(K)$ is an infinite simple algebraic group over an algebraically closed field $K$. Then $G(K)$ has elimination of imagineries in the language of groups. In particular, $G(K)$ codes finite sets.
\end{proposition}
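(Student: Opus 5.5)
The plan is to transfer elimination of imaginaries from the field $K$ to the group $G(K)$ through the bi-interpretability already used in the proof of Proposition \ref{equiv}, and then deduce coding of finite sets as a special case.

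\textbf{Step 1.} By \cite{zilbiint} and \cite[Corollary 4.16]{Po1}, $K$ and $G(K)$ are bi-interpretable. The bi-interpretation can be taken with the field $K$ interpreted in $G(K)$ over a finite tuple of parameters $\bar{g}$. Conversely, $G(K)$ is definable in $K$, for instance as a constructible set of points of an affine variety modulo a definable equivalence, or more directly as an affine group variety over the prime field.

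\textbf{Step 2.} The field $K$, as a pure algebraically closed field, has elimination of imaginaries (Poizat). Let $E$ be a $\emptyset$-definable equivalence relation on $G(K)^m$ and let $c$ be a class. Transport $E$ to a $\bar{g}$-definable equivalence relation on a definable subset of $K^N$. By elimination of imaginaries in $K$, the class has a code $d\in K^r$ over $\bar{g}$. Bringing $d$ back into $G(K)$ along the interpretation gives a tuple $e$ from $G(K)$ that is interdefinable with $c$ over $\bar{g}$.

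\textbf{Step 3.} Remove the parameters $\bar{g}$. This is the main obstacle, since elimination of imaginaries must hold over $\emptyset$ in the language of groups. I would try two routes.

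\begin{enumerate}
\item The first route uses that the interpretation of $K$ in $G(K)$ can be arranged uniformly: $\bar{g}$ ranges over a $\emptyset$-definable set $D$, and for $\bar{g},\bar{g}'\in D$ the resulting isomorphism between the two copies of $K$ is $\emptyset$-definable from $(\bar{g},\bar{g}')$. This is what bi-interpretability, as opposed to mere mutual interpretability, supplies. Then the code computed over $\bar{g}$ is transformed $\emptyset$-definably into the code computed over $\bar{g}'$. Hence the $E$-class is interdefinable with a finite set, or at least a definable family, of codes indexed by a connected-component-free parameter space.
\item The cleaner alternative is to note that $\dcl^{\mathrm{eq}}$ and $\acl$ can be computed in $K$. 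Every automorphism of the pure group $G(K)$ is induced by a field automorphism together with a conjugation (Borel--Tits, up to graph automorphisms), so $\aut(G(K))$ is understood concretely. One then checks the standard criterion: $T$ has elimination of imaginaries iff it has weak elimination of imaginaries and codes finite sets. Weak elimination transfers from $K$ via the uniform family, and finite sets are coded because $G(K)$ defines a field and hence has enough definable functions to code unordered tuples, for example through symmetric functions in coordinates.
\end{enumerate}

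\textbf{Step 4.} The ``in particular'' is immediate. A finite set $F\subset G(K)^m$ is a class of the $\emptyset$-definable equivalence relation on $G(K)^{m|F|}$ identifying tuples up to permutation, so its code under elimination of imaginaries is a code of $F$ in the sense used before Lemma \ref{eifin}.
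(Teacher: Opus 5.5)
Your Steps 1--2 are essentially the paper's whole proof. The paper notes that Zilber's interpretation of $K$ in $G(K)$ (on a minimal normal definable subgroup of a Borel subgroup $B$) and the embedding $G(K)\leqslant \mathrm{GL}_n(K)$ use no imaginary sorts, and then invokes elimination of imaginaries for ACF. It never mentions the parameters that $B$ and the field structure require. You are right that removing these parameters is the crux, but Step 3 cannot be carried out. Over $\emptyset$ (the sense in which coding finite sets is defined before Lemma \ref{eifin}), both conclusions fail for the pure group. The reason is that $Z(G)=1$, and conjugation by $g$ fixes a real tuple $\bar d$ if and only if $g\in C_G(\bar d)$. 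Take $G=\mathrm{PGL}_2$, whose monster model is $\mathrm{PGL}_2(\Uu_K)$ for a saturated algebraically closed field $\Uu_K$. Pick $t\in \Uu_K$ transcendental, put $a=\mathrm{diag}(t,1)$, and let $\psi$ be the group automorphism induced by a field automorphism with $t\mapsto t^2$.

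(i) The conjugacy class $a^G$ is a class of a $\emptyset$-definable equivalence relation and is fixed by all inner automorphisms. A real code $\bar d$ would therefore be central, hence trivial, so $a^G$ would be fixed by every automorphism. But $\psi(a^G)$ is the class of $\mathrm{diag}(t^2,1)$, which differs from $a^G$ since $t^2\notin\{t,t^{-1}\}$. Even weak elimination fails: if $a^G\in\dcl^{\mathrm{eq}}(\bar d)$ and $\bar d\in \acl^{\mathrm{eq}}(a^G)$, then $C_G(\bar d)$ has finite index in the infinite simple group $G$, so again $\bar d=1$.

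(ii) Let $F=\{a,a^{-1}\}$, with $T$ the diagonal torus and $w$ the antidiagonal element. Conjugation by $g$ preserves $F$ if and only if $g\in N_G(T)=T\cup Tw$. So a code of $F$ would lie in $C_G(N_G(T))=C_T(w)\subseteq\{1,\mathrm{diag}(-1,1)\}$ and be fixed by $\psi$, while $\psi(F)\neq F$.

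This also shows where your two routes break. Route 1 ends with the class interdefinable with a family of codes indexed by $\bar g\in D$. Such a family is itself an imaginary, so nothing has been eliminated. In Route 2, the ``symmetric functions in coordinates'' are computed in the field $K$, which exists in $G(K)$ only over $\bar g$, so they code finite sets only over $\bar g$. The asserted transfer of weak elimination is refuted by (i). What your Steps 1--2 (and the paper's argument) actually prove is elimination of imaginaries for the expansion $(G(K),\bar g)$ by the finitely many parameters of Zilber's interpretation, and hence coding of finite sets there via your Step 4. That is the form in which the proposition is correct, and it can be fed into Lemma \ref{eifin} when $\sigma$ fixes $\bar g$.
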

\begin{proof}
Since the theory of algebraically closed fields have elimination of imagineries, it is enough to observe that the bi-interpretation between the pure group $G(K)$ and the field $K$ (discussed around Proposition \ref{equiv}) does not use any imaginary sorts. Since $K$ is algebraically closed, we can use Zilber's description from \cite[Lemma 2]{zilbiint}. The group $B$ appearing there is a Borel subgroup of $G(K)$, so it is definable in the home sort of $G(K)$. The field $K$ is defined on a minimal normal definable subgroup of $B$, so it lives in the home sort again. Clearly, the algebraic group $G(K)$ lives in the home sort of $K$ as well as a subgroup of the group of invertible matrices over $K$ of some fixed size.
\end{proof}

\begin{remark}
One can identify model-theoretic properties satisfied by infinite simple algebraic groups over algebraically closed fields and subsequently test the Cherlin–Zilber conjecture by investigating whether simple groups of finite Morley rank share these properties. We list several such properties below. Let $G$ be an infinite simple algebraic group over an algebraically closed field:
\begin{enumerate}
\item The fixed-point subgroup of a generic automorphism of $G$ is pseudofinite (this property is crucial!).
\item The group $G$ eliminates imaginaries (see Proposition~\ref{ei}).
\item The group $G$ is model complete. (This was recently established in \cite{HKTY} for any simple algebraic group over an arbitrary model complete field.)\end{enumerate}
\end{remark}
After all of this, the most significant obstacle remains providing an ``Ax-like'' characterization of pseudofinite groups. It is not true in general that if $\sigma$ is a generic automorphism of $M$, then $\fix(\sigma)$ is a pseudofinite model of $T = \operatorname{Th}(M)$. A counterexample is provided by any differentially closed field $M$.

Nevertheless, in \cite[Proposition 11.1]{Hrupfstr}, Hrushovski establishes the existence of a definable measure in the setting of Proposition~\ref{zhat}. He suggests that certain arguments from finite group theory may be applicable to definable sets in $\fix(\sigma)$, potentially providing a path toward proving the algebraicity of $G$ by leveraging Galois theory and numerical invariants beyond pure dimension theory.
\bibliographystyle{plain}
\bibliography{harvard}

\end{document}